\newcommand{\SL}{\mathrm{SL}}
\newcommand{\GL}{\mathrm{GL}}
\newcommand{\Sp}{\mathrm{Sp}}
\newcommand{\algG}{\mathbf{G}}
\newcommand{\algLieG}{\mathrm{Lie}(\algG)}
\newcommand{\LieG}{\mathfrak{g}}
\newcommand{\Ad}{\mathrm{Ad}}
\newcommand{\LieT}{\mathfrak{t}}
\newcommand{\so}{\mathfrak{so}}
\newcommand{\Liesl}{\mathfrak{sl}}
\newcommand{\gl}{\mathfrak{gl}}
\newcommand{\algOq}{\mathbf{O}(q)}
\newcommand{\algO}{\mathbf{O}_n}
\newcommand{\algSO}{\mathbf{SO}_n}
\newcommand{\Orth}{\mathrm{O}}
\newcommand{\SO}{\mathrm{SO}}
\newcommand{\sltwok}{\mathfrak{sl}_2(\ratk)}
\newcommand{\sltwoR}{\mathfrak{sl}_2(\R)}
\newcommand{\gz}{\mathfrak{s}}
\newcommand{\N}{\mathbb{N}}
\newcommand{\Z}{\mathbb{Z}}
\newcommand{\resk}{\mathfrak{f}}
\newcommand{\field}{F}
\newcommand{\p}{\varpi}
\newcommand{\val}{\mathrm{val}}
\newcommand{\chr}{\mathrm{char}}
\newcommand{\diag}{\textrm{diag}}
\newcommand{\Hom}{\mathrm{Hom}}
\newcommand{\ratk}{k}
\newcommand{\ratks}{(k^\times)^2}
\newcommand{\R}{\mathcal{R}}
\newcommand{\foo}{\R}
\newcommand{\PP}{\mathcal{P}}
\newcommand{\ang}[1]{\ensuremath{\langle #1 \rangle}}
\newcommand{\noar}{\ensuremath{-1 \notin \ratk^2}} 
\newcommand{\scno}{\ensuremath{-1 \in \ratk^2}} 
\newcommand{\Gammamax}{{\Gamma_{\textrm{max}}}}
\newcommand{\LieTrip}{{\mathrm{d}\phi}}
\newcommand{\phiR}{{\phi_\R}}
\newcommand{\ep}{\varepsilon}
\newcommand{\Ip}{\mathcal{I}_\partt} 
\newcommand{\mat}[1]{\left[ \begin{matrix} #1 \end{matrix} \right]}
\newcommand{\X}{\mathsf{X}}
\newcommand{\Ximj}{\X_{i,j}}  
\newcommand{\Xipj}{\X_{i,-j}} 
\newcommand{\Xil}{\X_i^\ell} 
\newcommand{\Hi}{\mathsf{H}_i}
\newcommand{\Hj}{\mathsf{H}_j}
\newcommand{\HH}{\mathsf{H}}
\newcommand{\lconj}[2]{{}^{#1}#2}      
\newcommand{\kay}{\kappa}
\newcommand{\spn}{\mathrm{span}}
\newcommand{\la}{\langle}
\newcommand{\ra}{\rangle}
\newcommand{\labelstyle}[1]{\textsf{\textbf{#1}}}
\newcommand{\labeleven}{{\labelstyle{even}}}
\newcommand{\labelhyp}{{\labelstyle{hyp}}}
\newcommand{\labelpairs}{{\labelstyle{pairs}}}
\newcommand{\labelfours}{{\labelstyle{quad}}}
\newcommand{\labelani}{{\labelstyle{ani}}}
\newcommand{\labelsign}{{\labelstyle{sign}}}
\newcommand{\labeltriple}{{\labelstyle{trip}}}
\newcommand{\lab}{\labelstyle{L}}
\newcommand{\buil}{\mathcal{B}}
\newcommand{\cB}{\buil}
\newcommand{\apart}{\mathcal{A}}
\newcommand{\facet}{\mathcal{F}}
\newcommand{\set}[1]{\ensuremath{\{#1\}}}
\newcommand{\cL}{\mathcal{L}}
\newcommand{\cU}{\mathcal{U}}
\newcommand{\cV}{\mathcal{V}}
\newcommand{\iicM}{{\mathcal{M}^i}}
\newcommand{\iicMx}{\mathcal{M}^i_x}
\newcommand{\iicMxt}{\mathcal{M}^i_{x,t}}
\newcommand{\iicMxmtp}{\mathcal{M}^i_{x,-t+}}
\newcommand{\bU}{\mathbb{U}}
\newcommand{\bL}{\mathbb{L}}
\newcommand{\sR}{\mathscr{R}}
\newcommand{\sE}{\mathscr{E}}
\newcommand{\rU}{\mathrm{U}}
\newcommand{\LattD}{\mathrm{Latt}_{\R_D}}
\newcommand{\Num}[2]{\mathsf{N}(#2)_{#1}}  
\newcommand{\Numb}[2]{\mathsf{M}(#2)_{#1}}  
\newcommand{\Numc}[4]{\mathsf{T}(#4)_{#1,#2,#3}}  
\newcommand{\qtup}{\mathsf{q}}  
\newcommand{\cls}[1]{\overline{#1}}  
\newcommand{\da}[1]{\dim\cls{#1}}   
\newcommand{\W}{\mathcal{W}}  
\newcommand{\ani}{{n_\circ}} 
\newcommand{\anip}{{n'_\circ}} 
\newcommand{\aw}{\mathsf{u}}
\newcommand{\bw}{{\mathsf{u}'}}
\newcommand{\oap}{\chi}
\newcommand{\obp}{\chi}
\newcommand{\ai}{\chi_i}
\newcommand{\ao}{{\chi_1}}
\newcommand{\bo}{{\chi_2}}
\newcommand{\co}{{\chi_{3+}}}
\newcommand{\Hyp}{\mathcal{H}} 
\newcommand{\Quat}{\mathcal{Q}_4} 
\newcommand{\rp}{\rho\varpi}
\newcommand{\qani}{q_{\circ}}
\newcommand{\slmod}{U}
\newcommand{\mults}{M}
\newcommand{\slf}{\beta}
\newcommand{\wtf}{\gamma}
\newcommand{\partt}{\lambda}
\newcommand{\Partt}{\Lambda^e}
\newcommand{\Partte}{\Lambda^{ve}}
\newcommand{\Ppt}{P_{\partt,q}}
\newcommand{\Nilp}{\mathcal{N}}
\newcommand{\Nilps}{\mathcal{N}^{hyp}}
\newcommand{\Qtup}{\mathcal{Q}}
\theoremstyle{plain}
\newtheorem{theorem}{Theorem}[section]
\newtheorem*{maintheorem}{Theorem}
\newtheorem{lemma}[theorem]{Lemma}
\newtheorem{proposition}[theorem]{Proposition}
\newtheorem{corollary}[theorem]{Corollary}
\newtheorem{algorithm}[theorem]{Algorithm}
\theoremstyle{definition}
\theoremstyle{remark}
\newtheorem{example}{Example}
\numberwithin{equation}{section}
\begin{document}

\title[Nilpotent orbits of orthogonal groups]{Nilpotent orbits of orthogonal groups over $p$-adic fields, and the DeBacker parametrization}
\author{Tobias Bernstein} \address{Department of Mathematical Sciences, University of Alberta, Edmonton, Canada}
\email{tobias.toby.bernstein@gmail.com}
\author{Jia-Jun Ma}\address{School of Mathematical Sciences,
Shanghai Jiao Tong University,
800 Dongchuan Rd Shanghai, 200240, China}\email{hoxide@sjtu.edu.cn}
\author[Monica Nevins]{Monica Nevins${}^{1,2}$}\address{Department of Mathematics and Statistics, University of Ottawa, Ottawa, Canada}\email{mnevins@uottawa.ca}
\author{Jit Wu Yap}\address{Department of Mathematics, National University of Singapore, Block S17, 10 Lower Kent Ridge Road, Singapore 119076}\email{yapjitwu@gmail.com}
\thanks{${}^1$ This author's research is supported by a Discovery Grant from NSERC Canada.}\thanks{${}^2$ Corresponding author.}

\subjclass{20G25 (17B08, 17B45)}
\date{\today}

\begin{abstract}For local non-archimedean fields $\ratk$ of sufficiently large residual characteristic, we explicitly parametrize and count the rational nilpotent adjoint orbits in each algebraic orbit of orthogonal and special orthogonal groups.  We separately give an explicit algorithmic construction for representatives of each orbit. We then, in the general setting of groups $\mathrm{GL}_n(D)$, $\mathrm{SL}_n(D)$ (where $D$ is a central division algebra over $\ratk$) or classical groups, give a new characterisation of the ``building set'' (defined by DeBacker) of an $\mathfrak{sl}_2(\ratk)$-triple in terms of the building of its centralizer. Using this, we prove our construction realizes DeBacker's parametrization of rational nilpotent orbits via elements of the Bruhat-Tits building. \end{abstract}
\keywords{p-adic groups; nilpotent orbits; DeBacker classification; quadratic forms; Bruhat-Tits buildings}
\maketitle

\section{Introduction}

Rational, or arithmetic, nilpotent adjoint orbits of algebraic groups over a local field $\ratk$ arise in representation theory in several contexts.  For example, the Harish-Chandra--Howe character formula locally expresses a character of a representation as a linear combination of (Fourier transforms of) nilpotent orbital integrals.  As another example, the orbit method would parametrize representations by \emph{admissible} coadjoint orbits, with the admissible nilpotent orbits corresponding to core singular cases.

Algebraic, or geometric, nilpotent adjoint orbits can be thought of as those under the algebraic group over the algebraic closure of the local field. 
These orbits can be parameterized in multiple ways, including the Bala-Carter classification (extended to low characteristic by McNinch and others), weighted Dynkin diagrams, and partition-type classifications (for classical groups).  

The rational points of an algebraic orbit form zero or more rational orbits, and these can in principle be counted using Galois cohomology; yet it remained an open combinatorial problem to count these orbits for orthogonal groups.  Solving this is the first goal of this paper, in Section~\ref{S:count}.

Our second goal is to present an algorithm for generating  representatives for all rational nilpotent orbits of orthogonal and special orthogonal groups over $\ratk$, in the spirit of the one presented by Collingwood and McGovern in \cite{CMcG1993} over $\mathbb{R}$; our solution is presented in Section~\ref{S:representatives}. 

Our third and most important goal is to offer insight into the geometric para\-met\-rization of rational nilpotent orbits by elements of the Bruhat-Tits building of $G$ that was proposed by DeBacker in \cite{DeBacker2002}.  To this end we prove, in the more general setting of $G=\GL_n(D)$,  $\SL_n(D)$ (for $D$ a central division algebra over $\ratk$) and classical groups, that DeBacker's ``building set'' attached to a Lie triple can be identified with the building of the centralizer of that Lie triple.  This kind of ``functoriality result'' gives a coherent interpretation of the geometry of the DeBacker parametrization, and is presented in Section~\ref{S:functoriality}.

Finally, combining these results, we attach a representative of each nilpotent orbit to a facet of the building in our standard apartment, and prove that this gives an explicit realization of the DeBacker correspondence, in Section~\ref{S:minimal}.

\vspace{12pt}

Let us now summarize the results of this paper in more detail.

For symplectic, orthogonal or special orthogonal groups the algebraic nilpotent orbits can be parametrized by partitions.   
Let $\partt$ be a partition of $n$ in which even parts occur with even multiplicity and let $\mathcal{O}_\partt$ denote the corresponding nilpotent adjoint orbit of the algebraic group $\algO$.  
For $i\in \mathbb{N}$ let $\ai$ be the number of \emph{odd} parts of $\lambda$ whose multiplicity is exactly $i$.  Set $\co := \sum_{i\geq 3} \ai$. 

Let $(q,V)$ be a quadratic space; then $\algOq$ is a fixed $\ratk$-form of $\algO$ with respect to which we take the $\ratk$-points of this orbit.  Assume the characteristic of $\ratk$ is either zero or sufficiently large (see Section~\ref{S:nilpotent}) and that $p\neq 2$.  Then it is known that the rational orbits occurring $\mathcal{O}_{\partt}(\ratk)$ are parametrized by certain tuples $\qtup$ of quadratic forms (Theorem~\ref{T:mainthm}).  Our first result is to compute their number.

\begin{maintheorem}[Theorem~\ref{T:count}]
Let $(q,V)$ be a nondegenerate $n$-dimensional quadratic space of anisotropic dimension $\ani \leq n$. Then the number of $\ratk$-rational orbits under $\Orth(q)$ in $\mathcal{O}_{\partt}(\ratk)$ is
$$
\Numc{\ao}{\bo}{\co}{q} = \begin{cases}
\frac18 4^{\ao} 7^{\bo} 8^{\co} & \text{if $\co\geq 1$};\\
\frac18 4^{\ao} 7^{\bo}+ (2-\ani)2^{\ao-2} & \text{if $\co=0$, $\ao\geq 1$};\\
\lfloor \frac18 7^{\bo}\rceil + \ep_{\ani,\bo} & \text{if $\ao=\co=0$},
 \end{cases}
$$
where $\ep_{\ani,\bo}=0$ unless either $\ani=0$ and $\bo$ is even, or $\ani=4$ and $\bo$ is odd, in which cases $\ep_{\ani,\bo}=(-1)^{\bo}$.  The number of $\ratk$-rational orbits in $\mathcal{O}_{\partt}(\ratk)$  under $\SO(q)$ is the same unless $\ao=\bo=\co=\ani=0$, when there are two.
\end{maintheorem}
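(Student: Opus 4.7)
The plan is to invoke Theorem~\ref{T:mainthm}, which identifies rational $\Orth(q)$-orbits in $\mathcal{O}_{\partt}(\ratk)$ with tuples $\qtup = (q_\ell)_\ell$ of isometry classes of nondegenerate quadratic forms, indexed by the distinct odd parts $\ell$ of $\partt$, with $\dim q_\ell$ equal to the multiplicity $m_\ell$ and satisfying a constraint linking the aggregate (discriminant, Hasse invariant) of $\qtup$ to those of $q$ (after removing the fixed hyperbolic contribution from the even parts of $\partt$). The counting problem then reduces to counting such constrained tuples, sorted by the multiplicity profile $(\ao, \bo, \co)$.

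Over $\ratk$ with $|\ratk^\times/\ratks|=4$, nondegenerate quadratic forms of dimension $d$ are classified by (discriminant $\delta \in \ratk^\times/\ratks$, Hasse invariant $\ep \in \{\pm 1\}$), with class counts $N_1 = 4$, $N_2 = 7$ (exactly one $(\delta,\ep)$-pair is missing in dimension 2, a consequence of the Hilbert-symbol identity $(a, -a) = 1$), and $N_d = 8$ for $d \geq 3$. The unconstrained tuple count is $4^\ao 7^\bo 8^\co$, and the constraint amounts to two equations on the aggregate invariants via $\delta(q_1 \oplus q_2) = \delta(q_1)\delta(q_2)$ and the cocycle $\ep(q_1 \oplus q_2) = \ep(q_1)\ep(q_2)(\delta(q_1), \delta(q_2))$. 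In the generic case $\co \geq 1$, I fix any odd part $\ell_0$ of multiplicity $\geq 3$; since every $(\delta,\ep)$ is realized in dimension $\geq 3$, for each free choice of the other $\ao + \bo + \co - 1$ entries the form $q_{\ell_0}$ is uniquely determined, yielding $\tfrac18 4^\ao 7^\bo 8^\co$ tuples.

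The main obstacle is the case $\co = 0$, where $N_1 < 8$ and $N_2 < 8$ mean the last entry cannot always absorb the constraint. For $\co = 0$ and $\ao \geq 1$, dim-1 forms have trivial Hasse invariant, so the Hasse equation cannot be freely adjusted by varying the final entry; a direct analysis of when the relevant Hilbert-symbol character on $\ratk^\times/\ratks$ is surjective onto $\{\pm 1\}$ (which fails exactly under a condition controlled by $\ani$) yields the correction $(2-\ani)2^{\ao-2}$. For $\ao = \co = 0$, a parallel character-sum (or convolution) argument on dim-2 classes in $(\ratk^\times/\ratks) \times \{\pm 1\}$ produces $\lfloor \tfrac18 7^\bo \rceil$ together with the $\ep_{\ani,\bo}$ correction, arising from the parity of $\bo$ at the extremal values $\ani \in \{0,4\}$ where the relevant Hilbert-symbol character becomes trivial. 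Finally, the $\SO(q)$ count follows from the $\Orth(q)/\SO(q) \cong \Z/2\Z$-action on orbits: in all nondegenerate cases, the stabilizer of a representative contains a reflection in an anisotropic vector preserved by the representative, so each $\Orth(q)$-orbit remains a single $\SO(q)$-orbit; the lone exception $\ao = \bo = \co = \ani = 0$ (the classical ``very even'' case with $q$ split) has stabilizer contained in $\SO(q)$ and so splits into two $\SO(q)$-orbits.
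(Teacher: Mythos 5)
You correctly set up the reduction via Theorem~\ref{T:mainthm} (counting isometry classes of tuples $\qtup$ with $\cls{\qtup}=\cls{q}$), your handling of the generic case $\co\geq 1$ is essentially the paper's argument (an entry of degree $\geq 3$ can absorb any Witt class), and your $\SO(q)$ argument matches the proof of Theorem~\ref{T:mainthm}: the centralizer $\Orth(q)^\phi$ contains a determinant-$(-1)$ element exactly when $\partt$ has an odd part. Your bookkeeping by (discriminant, Hasse invariant) with the Hilbert-symbol cocycle is an equivalent but differently-phrased version of the paper's Witt-group bookkeeping.

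The gap is in the cases $\co=0$, which are precisely where the content of the theorem lives. The paper proves these via two inductive lemmas --- Lemma~\ref{L:1dcase} for $\ao$-tuples of degree-one forms and Lemma~\ref{L:2dcase} for $\bo$-tuples of degree-two forms, each a careful induction in which the recursion depends on the anisotropic dimension and, for the degree-two case, on the parity of $\bo$ --- and then a nontrivial convolution step $\Numc{\ao}{\bo}{0}{\aw}=\sum_{\bw\in\W_2}\Num{\ao}{\aw-\bw}\Numb{\bo}{\bw}$ (display \eqref{E:summands}) that must be evaluated using the structure of $\ep_{\dim(\bw),\bo}$. Your paragraph for $\co=0, \ao\geq 1$ never engages the $\bo>0$ case, and yet the most delicate point is exactly that the correction $(2-\ani)2^{\ao-2}$ is \emph{independent of $\bo$}: a naive character-sum over both degree-one and degree-two classes would produce cross terms, and one has to verify that these collapse as the paper does. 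Both correction terms --- $(2-\ani)2^{\ao-2}$ and $\ep_{\ani,\bo}$ --- are merely asserted in your write-up (``a direct analysis \ldots yields the correction'', ``a parallel character-sum \ldots produces \ldots''); these assertions are the theorem, and they need to be replaced by actual computations along the lines of the paper's Lemmas \ref{L:1dcase}, \ref{L:2dcase} and their combination, or by a fully executed Fourier computation on $\W_\ratk$, before the proof is complete.
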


The arithmetic complexity of this answer stands in sharp contrast to the cases of $\SL(n)$ and $\Sp(2n)$.  The number of $\SL(n,\ratk)$-orbits in one algebraic nilpotent orbit of $\SL(n)$ is $\vert \ratk^\times/(\ratk^\times)^g\vert$ where $g$ is the gcd of the parts of the corresponding partition.  
The number of $\Sp(2n,\ratk)$ orbits in the algebraic orbit of $\Sp(2n)$ corresponding to a partition $\lambda$  is simply $4^{\ao}7^{\bo}8^{\co}$ where now $\ai$ counts the  number of \emph{even} parts of $\lambda$ with multiplicity exactly $i$ \cite{Nevins2011a}.

Next, for each pair $(\partt,\qtup)$ parametrizing a rational nilpotent orbit of $\SO(q)$, we choose, following Proposition~\ref{P:partition}, a partition $\Gamma$ of the set $\Ip :=\{(i,j) \mid i \in \partt, \, 1\leq j\leq m_i\}$, where $m_i$ denotes the multiplicity of $i$ in $\partt$.   We construct an explicit orbit representative and associated Jacobson-Morozov triple $\LieTrip$ by constructing subspaces of $V$ for each part of $\Gamma$ in Sections~\ref{S:hyp} to \ref{S:ve}.  This kind of explicit parametrization has many uses in representation theory, including: computing Fourier coefficients of automorphic forms as in \cite{Ahlenetal2018} and \cite{JiangLiu2015}; geometrizing invariant distributions coming from nilpotent orbits \cite{Christie}; and proving the motivic nature of Shalika germs as in \cite{FrechetteGordonRobson2015}, building on work of \cite{Diwadkar2008}.  Note that although determining a complete set of representatives in the case of special linear and symplectic groups is a direct generalization of the real case (see \cite{Nevins2011a}), orthogonal and special orthogonal groups present a special challenge, and this result is not straightforward.

Using a ``generalized Bala-Carter'' philosophy, DeBacker parametrized the rational nilpotent orbits of groups over, among others, local non-archimedean fields (with restrictions on residual characteristic) using the Bruhat-Tits building of the corresponding group.  The key construction is of a ``building set'' of a Lie triple $\{Y,H,X\}$, denoted $\buil(Y,H,X)$.  Namely, the DeBacker parametrization attaches to each rational nilpotent orbit $\mathcal{O}$ one or more  \emph{degenerate pairs}, which for our purposes we may take to be pairs $(\facet, X)$, where 
$\facet \subset \buil(Y,H,X)$ where $\facet$ is a facet and $\{Y,H,X\}$ is  a Lie triple extending a representative $X$ of $\mathcal{O}$ (see Section~\ref{S:minimal}).  When $\facet$ is maximal in $\buil(Y,H,X)$, the pair is called \emph{distinguished}; \emph{associativity classes} of distinguished pairs are in bijection with rational nilpotent orbits \cite{DeBacker2002}.  The challenge inherent in this description is that it does not suffice to work within a single apartment: a facet $\facet$ of the building may be maximal in $\buil(Y,H,X)\cap \apart$ without being distinguished! 

We prove the following general result in Section~\ref{S:functoriality}.  Here $D$ denotes a central division algebra over $\ratk$, $\R$ denotes the integer ring of $\ratk$, and $p$ is sufficiently large (see Section~\ref{S:nilpotent}).

\begin{maintheorem}[Theorem~\ref{T:building}]
  Suppose $G$ is $\GL_n(D)$, $\SL_n(D)$ or a classical group, and suppose
  $\LieTrip=\set{Y,H,X}$ is a Lie triple in $\LieG$, with corresponding homomorphism $\phi \colon \SL_2(\ratk) \to G$.   Let $G^\phi$ be the centralizer of $\phi(\SL_2(\ratk))$ in $G$.  
  Then $\buil(Y,H,X)=\cB(G)^{\phi(\SL_2(\R))}$ and there is a $G^\phi$-equivariant identification 
  \[
  \cB(G)^{\phi(\SL_2(\R))} = \cB(G^\phi). 
  \]
\end{maintheorem}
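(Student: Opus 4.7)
My approach is to use the concrete lattice-theoretic model of the Bruhat--Tits building for $G$ and for its centralizer $G^\phi$, translate $\phi(\SL_2(\R))$-fixed points into $\SL_2(\R)$-stable lattice structures on the standard module $V$ of $G$, and then invoke representation-theoretic decomposition results available under the nilpotency bound to split these into data on each $\SL_2(\ratk)$-isotypic component of $V$.

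First, I would fix the standard module $V$ of $G$ (so $V=D^n$, equipped with a nondegenerate hermitian, symmetric or alternating form when $G$ is classical), so that $\cB(G)$ is canonically identified with the space of (self-dual, when applicable) $\R$-periodic graded lattice functions $\cL\colon\R\to\{\text{lattices in }V\}$ in the Bruhat--Tits--Garrett sense. A point $x\in\cB(G)$ is then fixed by $\phi(\SL_2(\R))$ exactly when the lattice $\cL_x(r)$ is $\phi(\SL_2(\R))$-stable for every $r\in\R$. Via $\phi$, the module $V$ decomposes as $V=\bigoplus_\partt V_\partt$ with $V_\partt\cong W_\partt\otimes_\ratk U_\partt$, where $U_\partt$ is the irreducible $\SL_2(\ratk)$-module of dimension $\partt$ and $W_\partt$ is a multiplicity space on which $G^\phi$ acts; under the assumption that the highest weights occurring on $V$ are less than $p$ this decomposition is canonical, and $G^\phi=\prod_\partt H_\partt$ where each $H_\partt\subset\GL(W_\partt)$ is $\GL$, $\SL$ or a classical group determined by the parity of $\partt$ and the form on $V$.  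Consequently $\cB(G^\phi)=\prod_\partt\cB(H_\partt)$.

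Next I would construct the map $\Phi\colon\cB(G^\phi)\to\cB(G)^{\phi(\SL_2(\R))}$ explicitly: a tuple $(\cL_\partt)_\partt$ of (self-dual) lattice functions on $W_\partt$ is sent to the lattice function $r\mapsto\bigoplus_\partt\cL_\partt(r)\otimes_\R L_\partt$, where $L_\partt\subset U_\partt$ is the standard $\SL_2(\R)$-stable Chevalley lattice.  Under the bound on $\partt$, the pairing on $U_\partt$ restricts to a perfect pairing on $L_\partt$, so self-duality is preserved in the classical case; $G^\phi$-equivariance and injectivity of $\Phi$ are immediate from the tensor-product form.

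The expected main step is surjectivity: every $\phi(\SL_2(\R))$-stable lattice function on $V$ has the form above.  I would reduce this to showing that any $\SL_2(\R)$-stable $\R$-lattice $M\subset V$ decomposes as $M=\bigoplus_\partt N_\partt\otimes_\R L_\partt$ for some $\R$-lattices $N_\partt\subset W_\partt$.  The key input is that since all highest weights on $V$ are less than $p$, the eigenvalues of the $\phi$-Casimir on distinct isotypic components $V_\partt$ differ by units of $\R$, so Lagrange interpolation expresses the projectors onto the $V_\partt$ as polynomials in the Casimir with coefficients in $\R$, which hence preserve $M$; equivalently, the reduction $M\otimes_\R\resk$ is a semisimple $\SL_2(\resk)$-module whose isotypic decomposition lifts uniquely.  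Once $M=\bigoplus_\partt M_\partt$, the classification of $\SL_2(\R)$-stable lattices in a weight-$<p$ irreducible forces $M_\partt=N_\partt\otimes_\R L_\partt$; in the classical case a short duality argument matches self-duality of $M$ with self-duality of each $N_\partt$ under the induced form on $W_\partt$.  The careful tracking of integral and form structures across each $U_\partt$ and $W_\partt$, especially in the classical case, is the main technical obstacle, but once established the proof concludes.
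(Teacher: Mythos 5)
Your overall strategy coincides with the paper's: model $\cB(G)$ by (self-dual) lattice functions on $V$, decompose $V=\bigoplus_i \slmod_i\otimes \mults^i$ into $\SL_2$-isotypic pieces, define explicit maps between $\cB(G^\phi)$ and $\cB(G)^{\phi(\SL_2(\R))}$ by tensoring with and taking $\Hom$ from the standard lattice in $\slmod_i$, and reduce surjectivity to a lemma that any $\SL_2(\R)$-stable lattice in $V$ splits along the isotypic decomposition. This is precisely the paper's Lemma~\ref{lem:key} and the maps $\sR,\sE$. The self-duality bookkeeping for classical $G$ is also handled in essentially the way you indicate.

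However, there is a genuine gap in your proposed proof of the decomposition lemma. You claim that the Casimir eigenvalues on distinct isotypic components $V_\lambda$ differ by units of $\R$, so that Lagrange interpolation produces $\R$-integral projectors. This is false under the stated hypothesis. If $\slmod_m$ and $\slmod_n$ both occur (dimensions $m\neq n$, so highest weights $m-1,n-1$), the difference of Casimir eigenvalues is a unit multiple of $(m-n)(m+n)$; the hypothesis that the nilpotency degree of $X$ is less than $p$ only gives $m,n<p$, so $m+n$ can equal $p$. For instance with $p=5$ the trivial module ($\dim 1$) and $\slmod_4$ have congruent Casimir eigenvalues mod $\PP$, and the isotypic projectors are then not $\R$-integral, so the argument breaks. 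Your parenthetical ``equivalently, the reduction $M\otimes_\R\resk$ is a semisimple $\SL_2(\resk)$-module whose isotypic decomposition lifts uniquely'' is not equivalent to the Casimir claim, but it \emph{is} the correct route, and it is what the paper does: Lemma~\ref{lem:key} is proved by reducing mod $\PP$, observing semisimplicity of $\cL\otimes_\R\resk$ from the nilpotency bound, decomposing there, and then invoking McNinch's lifting result (\cite[Proposition 5.3.1]{McNinch2018}) to lift the decomposition back to the $\R$-lattice. If you replace the Casimir step with that reduction-and-lifting argument, your proof aligns with the paper's.
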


An immediate consequence is a formula for the dimension of all the maximal facets in $\buil(Y,H,X)$, whence it suffices to produce a pair $(\facet,X)$ attached to $\mathcal{O}$ of the correct dimension in order to deduce that it is distinguished.  We apply this approach to prove the correctness of our parametrization for orthogonal and special orthogonal groups in Proposition~\ref{P:dimension} and Theorem~\ref{T:DeBackercorrespondence}.

DeBacker's parametrization has only been explored in a handful of cases, including  \cite{Nevins2011a} for the special linear and symplectic groups, where the dimensions of the maximal facets were established via  combinatorial arguments.  

The current paper initially arose from an NSERC USRA project of T.~Bernstein, working under the supervision of M.~Nevins, on counting the number of rational nilpotent orbits for orthogonal groups.  M.~Nevins complemented this with a parametrization of these orbits and circulated a preprint, whereupon the J.-J.~Ma and J.W.~Yap shared their report \cite{Yap2018}.  In it, J.W.~Yap, working under the supervision of J.-J.~Ma, constructs distinguished representatives of rational nilpotent orbits of split even orthogonal groups (correcting also an error in the proof of \cite[Theorem~4]{Nevins2011a}), and proves Theorem~\ref{T:building} in that case.  They had also gone on to prove Theorem~\ref{T:building} as it appears here.  J.-J.~Ma would like to thank J.K.~Yu for many helpful discussions on this topic.

Several interesting questions remain open.

For one, although the proof of Theorem~\ref{T:building} is currently restricted to particular groups, J.-J.~Ma has shown separately the existence of the map $\sR\colon \buil(G)^{\phi(\SL_2(\R))} \to \buil(G^\phi)$ \eqref{eq:R} for any connected semisimple $\algG$ of adjoint type over $\ratk$, and conjectures that there should be a natural inverse map $\sE$.

For another, it is an open question to determine known invariants of rational nilpotent orbits in terms of the data of their DeBacker parametrization.  Together with \cite{Nevins2011a}, we now have the complete parametrization for all split classical groups, which opens the possibilities for study.  Part of the problem would be to give a combinatorial description of the associativity classes of facets in $\buil$, and more particularly of the $r$-associativity classes for  each $r\in \mathbb{R}$, which are greater in number and offer a finer parametrization.  

Our counting results rely on Jacobson-Morozov theory to describe the nilpotent orbits, and thus entail a restriction on the characteristic of $\ratk$.  It would be interesting to count rational orbits, and give explicit representatives, in these missing cases.

\vspace{12pt}

This paper is organized as follows.   In Section~\ref{S:notation} we establish our notation and some necessary results about quadratic forms.  In Section~\ref{S:nilpotent} we give the constraints on the characteristic and residual characteristic for the union of the results in this paper, and review key facts about nilpotent orbits and the orthogonal groups.  Section~\ref{S:count} is devoted to the proof of Theorem~\ref{T:count}, counting the number of rational orbits.  In Section~\ref{S:representatives}, we present an algorithm for generating representatives of each orbit.  To do so explicitly, we set the notation for root vectors in Section~\ref{SS:notation} and describe the overall strategy in Section~\ref{SS:strategy}, with details for each of the subcases in Sections~\ref{S:hyp} to \ref{S:ve}.  In Section~\ref{S:functoriality} we revert to the case of general $G$ and briefly recall the DeBacker parametrization, before proving Theorem~\ref{T:building}.
In Section~\ref{S:minimal} we attach to each of our orbit representatives a distinguished pair, thus establishing a new dictionary from the partition-based to the building-based parametrizations of rational nilpotent orbits for orthogonal and special orthogonal groups.

\section{Notation and the Witt group}\label{S:notation}

Let $\ratk$ be a local non-archimedean field of residual characteristic $p\neq 2$, with integer ring $\R$ and maximal ideal $\PP$ generated by a uniformizer $\varpi$.  Denote by $\resk$ the residue field of $\ratk$.  Let $\rho$ be a fixed nonsquare in $\R^\times$ with image $\rho_{\resk}$ in $\resk^\times$.   

The following theory is concisely presented in \cite{TBernstein2015} and based on \cite[Chapter 1]{Lam2005}. A quadratic space $(q,V)$ over a field $\field$ such that $\textrm{char}(F)\neq 2$ is a finite-dimensional vector space $V$ over $\field$ equipped with a regular quadratic form $q$; when needed, its associated (nondegenerate) bilinear form is denoted $B_q$, a matrix form is $M_q$, and the dimension of $V$ is $\deg(q)$, the degree of $q$.  Denote by $\Hyp$ the quadratic hyperbolic plane.  

If $(q,V)$ and $(q',V')$ are two quadratic spaces we write $q \cong q'$ if they are isometric and $q\simeq q'$ if the isometry classes of the quadratic forms $q$ and $q'$  differ by a sum of hyperbolic planes.  Then $\simeq$ defines an equivalence relation on the monoid of nondegenerate quadratic forms, and the resulting quotient is the \emph{Witt group} $\W_\field$ of $\field$ with trivial element denoted $\overline{0}$ or $\Hyp$.  
Write $\cls{q}$ for the image of $q$ in $\W_\field$, which we can identify up to isometry with the anisotropic kernel $\qani$ of $q$.  Then $\da{q}:=\deg(\qani) = \ani$ is the \emph{anisotropic dimension} of $q$.  

Each quadratic space $(q,V)$ admits a basis relative to which $q$ is diagonalized; in this case we write $q = \ang{a_1,\cdots, a_n}$ for some $a_i \in \field^\times$ but even up to permuting and scaling each coordinate by elements of $(\field^\times)^2$ this representation of $q$ is not necessarily unique.  

If $\field = \resk$, then since $p>2$ we have $\W_\resk = \{\Hyp, \ang{1}, \ang{\rho_\resk}, \ang{1,-\rho_{\resk}}\}$, which has the structure of $\W_\resk^-\cong (\Z/2\Z)^2$ if $\scno$ (that is, if $p$ is congruent to $1$ mod $4$) and of $\W_{\resk}^+\cong \Z/4\Z$ otherwise.  The identification of sets $\iota \colon \W_{\resk}^-\to \W_{\resk}^+$ is thus not a homomorphism but it is easy to check that it satisfies the very useful property that for all $\aw,\bw\in \W_{\resk}^-$,
\begin{equation}\label{E:iota1}
\dim(\iota(\aw)-\iota(\bw))=\dim(\iota(\aw-\bw)).
\end{equation}
If $\field = \ratk$, then the map $\rho_\resk \mapsto \rho$ induces a well-defined injection $i\colon \W_{\resk}\to \W_\ratk$.  In fact, the map which sends $(\aw,\bw)\in \W_{\resk}^2$ to the class of $i(\aw) \oplus \varpi i(\bw)$ defines an isomorphism $\W_\ratk \cong \W_{\resk}^2$.  (We may write $\W_\ratk^\pm$ when we want to specify the group structure.)

 We list the distinct elements of $\W_\ratk$ in the second and third columns of Table~\ref{Table:Witt}, in terms of the favoured representatives $\{1,\rho, \varpi, \rho\varpi\}$ for $\ratk^\times/(\ratk^\times)^2$, and grouped by their anisotropic dimension (given in the first column).   Write $\Quat$ for the unique class of anisotropic dimension $4$, which is the quaternionic class.   We now collect some facts needed for Section~\ref{S:count}.
 
\begin{table}
	\begin{center}
		\begin{tabular}{|c|c c|c|c|}
		\hline
		$\da{q}$	 & \multicolumn{2}{c|}{Representative for $\cls{q}$} & Number of & Common\\
			&\scno			&\noar		&Choices &Representative	\\	\hline \hline
		$0$			&\ang{a,a} 		&$\ang{1,\rho}=\ang{\p,\rp}$		&$4$&$\Hyp = \ang{a,-a}$	\\ \hline	
		$1$			&\ang{1}		&\ang{1}	&$1$	&\ang{1}	\\	
					&\ang{\rho}		&\ang{\rho}	&$1$	&\ang{\rho}	\\	
				&	\ang{\p}		&\ang{\p}	&$1$	&\ang{\p}	\\	
				&	\ang{\rp}		&\ang{\rp}	&$1$	&\ang{\rp}	\\	\hline
		$2$		&	\ang{1,\rho}		&\ang{1,1}=\ang{\rho,\rho}	&$2$	&\ang{1, -\rho} \\	
				&	\ang{1,\p}		&\ang{1,\p}	&$2$	&\ang{1,\p}	\\	
				&	\ang{1,\rp}		&\ang{1,\rp}	&$2$	&\ang{1,\rp}	\\	
				&	\ang{\rho, \p}	&\ang{\rho,\p}	&$2$ &\ang{\rho,\p}	\\	
				&	\ang{\rho, \rp}	&\ang{\rho, \rp} &$2$	&\ang{\rho,\rp}	\\	
				&	\ang{\p,\rp}		&\ang{\p,\p}=\ang{\rp,\rp} &$2$	&\ang{\p, -\rp}	\\ \hline
		$3$		&	\ang{1, \rho, \p}	&\ang{1,1, \p}	&$6$&\ang{1, -\rho, \p}	\\
				&	\ang{1, \rho, \rp}	&\ang{1, 1, \rp}	&$6$&\ang{1, -\rho, \rp}	\\ 
				&	\ang{1, \p, \rp}	&\ang{1, \p, \p} &$6$	&\ang{1, \p, -\rp}	\\
				&	\ang{\rho, \p, \rp}	&\ang{\rho, \p,\p} &$6$	&\ang{\rho, \p, -\rp}	\\  \hline
		$4$		&	\ang{1,\rho,\p,\rp} &\ang{1, 1, \p, \p} &$24$ &$\Quat = \ang{1,-1,\p,-\p}$ \\ \hline
		\end{tabular}
	\caption{Representatives of elements of $\W_\ratk$ (in two forms: simple ones dependent on the sign of $-1$ in $\ratk$, and more complex ones which are independent thereof), together with the number of choices of distinct diagonal representatives of each up to $\ratks$.}
	\label{Table:Witt}
	\end{center}
\end{table}

  \begin{lemma} \label{L:invariant}
  Let $\ratk$ be a local non-archimedean field of odd residual characteristic.  
  \begin{enumerate}[1.]
  \item The number of isometry classes of quadratic forms of degree $n$ is $4$ if $n=1$, $7$ if $n=2$, and $8$ if $n\geq 3$.
\item   The number of choices of distinct diagonal representations of each anisotropic form or hyperbolic plane, counting order but modulo $\ratks$, is an invariant of the anisotropic dimension and is independent of the class of $p$ mod $4$.
\item  The map $\iota$ extends to a bijection $\iota \colon \W_{\ratk}^- \to \W_{\ratk}^+$ such that for all $\aw,\bw \in \W_{\ratk}^-$, \begin{equation}\label{E:difference}
\dim(\iota(\aw)-\iota(\bw))=\dim(\iota(\aw-\bw)).
\end{equation}
\end{enumerate}
  \end{lemma}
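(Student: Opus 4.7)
The plan is to handle the three parts of Lemma~\ref{L:invariant} with three complementary tools: the Witt decomposition for (1), direct case-by-case inspection of Table~\ref{Table:Witt} for (2), and the decomposition $\W_\ratk \cong \W_\resk^2$ for (3).

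For (1), I would note that every nondegenerate quadratic form $q$ of degree $n$ over $\ratk$ has a unique Witt decomposition $q \cong \qani \oplus m\Hyp$ with $\qani$ the anisotropic kernel and $\da{q} \leq 4$ (the upper bound being visible in Table~\ref{Table:Witt}). The number of isometry classes of forms of degree $n$ is then the sum, over anisotropic dimensions $d \in \{0,1,2,3,4\}$ with $d \equiv n \pmod 2$ and $d \leq n$, of the number of classes in $\W_\ratk$ of that anisotropic dimension. Reading off Table~\ref{Table:Witt}, these counts are $1, 4, 6, 4, 1$ for $d = 0, 1, 2, 3, 4$; summing over admissible $d$ yields $4$ when $n=1$, $1+6 = 7$ when $n=2$, and $8$ in every remaining case.

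For (2), I would verify the last column of the table by direct enumeration. For an anisotropic form of dimension $d \geq 1$, a diagonal representation up to $\ratks$ is an ordered tuple $(a_1,\dots,a_d) \in (\ratk^\times/\ratks)^d$ such that $\ang{a_1,\dots,a_d}$ is isometric to the given class. When $\scno$ the square classes appearing are pairwise distinct, giving $d!$ orderings of a single multiset; when $\noar$ the isometries $\ang{a,a} \cong \ang{a\rho,a\rho}$ (consequences of $-1 \equiv \rho \pmod{\ratks}$) produce several unordered multisets, but the accumulated ordered count matches (for instance, in dimension $4$ one obtains four multisets of shape $\ang{x,x,y,y}$, each contributing $\binom{4}{2}=6$ orderings, totalling $24$). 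The hyperbolic plane admits the four diagonal representations $\ang{a,-a}$ as $a$ ranges over $\ratk^\times/\ratks$. Together these calculations yield the uniform counts $4, 1, 2, 6, 24$ for $d = 0, 1, 2, 3, 4$.

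For (3), I would exploit the group isomorphism $(\aw,\bw) \mapsto i(\aw)\oplus \varpi i(\bw)$ identifying $\W_\ratk \cong \W_\resk^2$ and intertwining the $\pm$ structures, then define $\iota \colon \W_\ratk^- \to \W_\ratk^+$ coordinatewise by $\iota(\aw,\bw) := (\iota(\aw),\iota(\bw))$, with the $\iota$ on the right the residue-field identification. Since $i(\aw)$ and $\varpi i(\bw)$ take their nonzero values in $\R^\times$ and $\varpi\R^\times$ respectively, the two summands cannot cancel, so $i(\aw)\oplus \varpi i(\bw)$ is anisotropic with dimension $\dim(\aw)+\dim(\bw)$. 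Subtraction in $\W_\ratk^\pm \cong (\W_\resk^\pm)^2$ is coordinatewise, so identity~\eqref{E:difference} follows by summing the two instances of the residue-field identity~\eqref{E:iota1}.

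The main obstacle I anticipate is (2): the assertion that the ordered count is independent of $p \bmod 4$ looks clean, but the underlying collections of unordered multisets are genuinely different in the $\scno$ and $\noar$ cases, so in the absence of a conceptual bijection between the two setups the argument amounts to a careful enumeration for each anisotropic dimension.
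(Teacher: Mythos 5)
Your proof is correct and follows essentially the same approach as the paper, which simply reads parts (1) and (2) off Table~\ref{Table:Witt} and proves (3) via the decomposition $\W_\ratk \cong \W_\resk^2$ with the observation that the unramified and ramified summands cannot cancel. You fill in more of the computational detail behind the table (the sum over admissible anisotropic dimensions for (1), and the $d!$ versus multiple-multiset bookkeeping for (2)) than the paper chooses to display, but the route is the same.
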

  
\begin{proof}
The first statement is well-known, but can also be inferred from Table~\ref{Table:Witt} directly.
We have recorded the number of choices of distinct diagonal representatives for each class of anisotropic form or hyperbolic plane, counting order but modulo scaling in each factor by $\ratks$, in the fourth column of Table~\ref{Table:Witt}; this establishes the second assertion.  The map $\iota$ extends via the isomorphisms $i \colon \W_\ratk^\pm  \to (\W_\resk^\pm)^2$.  Since $\dim(i(\aw)\oplus \varpi i(\bw))= \dim(\aw)\oplus \dim(\bw)$ for any $\aw,\bw \in \W_{\resk}$, \eqref{E:difference} follows from \eqref{E:iota1}.  For convenience, we have recorded the common representatives defining the map $\iota$ in the last column of Table~\ref{Table:Witt}.
\end{proof}

We say that two tuples of quadratic forms $(q_1,q_2, \ldots, q_s)$ and $(q_1', q_2', \ldots, q_{s'}')$ are  \emph{isometric} if $s=s'$ and for all $i$, $q_i\cong q_i'$.  Let $\qtup = [q_1, q_2,\ldots, q_s]$ denote the corresponding  isometry class; then $\cls{\qtup}=\cls{q_1}+\cdots+\cls{q_s}$ is a well-defined element of $\W_\ratk$.
We say that $\qtup$ represents $q$ if $\cls{\qtup}=\cls{q}$.

Now let $\partt$ be a partition of $n$.  The multiplicity $m_i$ of $i$ in $\partt$ is number of times $i$ occurs in $\partt$.  Let $\Partt(n)$ denote the set of partitions of $n$ in which even parts occur with even multiplicity.  For $\partt \in \Partt(n)$, let $\Qtup_\partt = \{\Hyp\}$ if $\partt$ has no odd parts; otherwise, let $j_1<j_2<\ldots <j_s$ denote its distinct odd parts and let
$$
\Qtup_{\partt} = \{ \qtup := [q_{1},\ldots, q_{s}] \mid \text{for each $i$, $q_{i}$ is a quadratic form of degree $m_{j_i}$} \}
$$
be the set of isometry classes of $s$-tuples of quadratic forms of the stated degrees. 

Given a quadratic space $(q,V)$ of degree $n$, we set
\begin{equation}\label{Nqn}
\Nilp(\cls{q},n) = \{ (\partt, \qtup) \mid \partt\in \Partt(n), \qtup \in \Qtup_\partt \text{\; such that \;} \cls{\qtup} = \cls{q}\}.
\end{equation}
If $n$ is even, let $\Partte(n)\subset \Partt(n)$ be the subset of partitions of $n$ which have no odd parts; these are called \emph{very even partitions}.  If $\cls{q}=\cls{0}$ then to each very even partition we attach two distinct copies of $\Hyp$, to give
\begin{equation}\label{Nn}
\Nilps(n) = \Nilp(\cls{0},n) \sqcup  \{(\partt,\Hyp') \mid \partt \in \Partte(n)\}.
\end{equation}

\section{Lie triples, nilpotent adjoint orbits, and the orthogonal group}\label{S:nilpotent}
Let $\algG$ be a semisimple algebraic group defined over $\ratk$ and $\algLieG$ its Lie algebra.   Let $h$ be the maximal value of the Coxeter number of any irreducible component of the root system of $\algG$.  We assume that the characteristic of $\ratk$ is either zero, or else is greater than $3(h-1)$.  This hypothesis implies, by \cite[\S 5.5]{Carter1985}, that each Lie triple defined below  lifts to a unique homomorphism of algebraic groups $\phi \colon {\mathbf{\SL_2}}\to \algG$, and that the degree of nilpotency of any nilpotent element is less than $p$.  Hence, if $\chr(\ratk)=p$, then for each nilpotent element $X \in \algLieG$, we have $X^{[p]}=0$ where this denotes the $p$-operation on the restricted Lie algebra $\algLieG$.  We note that by the work of G.~McNinch, including particularly \cite{McNinch2005}, these hypotheses on $p$ can often be weakened via the theory of optimal $\SL_2$-homomorphisms.

\subsection{Lie triples}
Let $G=\algG(\ratk)$ and $\LieG=\algLieG(\ratk)$.  A Lie triple is a nonzero set $\{Y,H,X\}\subset \algLieG$ such that $[H,X]=2X$, $[H,Y]=-2Y$ and $[X,Y]=H$. Then  Jacobson-Morozov theory \cite[VIII,\S11]{Bourbaki1975}, \cite{McNinch2005} 
asserts a bijection between the nonzero nilpotent orbits of $\algG$ on $\algLieG$ (respectively, of $G$ on $\LieG$) and conjugacy classes of Lie triples in $\algLieG$ under $\algG$ (respectively, in $\LieG$ under $G$), given by associating the triple to the orbit of its nilpositive element $X$.  Moreover, by \cite[\S 5.5]{Carter1985} there is a group homomorphism $\phi \colon {\mathbf{\SL_2}} \to \algG$ defined over $\ratk$ for which 
\begin{equation}\label{E:dphi}
\LieTrip\left( \mat{0&0\\1&0} \right)=Y,\ \LieTrip\left( \mat{1&0\\0&-1} \right)=H, \;\text{and}\; \LieTrip\left( \mat{0&1\\0&0} \right)=X.
\end{equation}
We often denote the Lie triple $\{Y,H,X\}$ by $\LieTrip$.

Suppose $V$ is a $G$-module; then the subgroup $\phi(\SL_2(\ratk))$ (or equivalently its Lie algebra $\LieTrip$) decomposes $V$  into pairwise orthogonal isotypic components.  Let $\slmod_i$ denote the unique irreducible $\sltwok$-module of degree $i$, and set $\mults^i := \Hom_{\sltwok}(\slmod_i,V)$.  Then for each $i$ the map sending $(u,T)\in \slmod_i\otimes \mults^i$ to $T(u)\in V$ induces an isomorphism
\begin{equation}\label{decompgen}
V = \bigoplus_{i\in \N} V_i \cong \bigoplus_{i \in \N}\slmod_i \otimes \mults^i,
\end{equation}
where $V_i$ is the $\slmod_i$-isotypic component of $V$.  Since elements of $G^\phi$ commute with $\LieTrip$, each space $\mults^i$ is a $G^\phi$-module.

In Section~\ref{S:functoriality}, we have need of the preceding in the more general setting of $\SL_2(\R)$-modules.  For each $n<p$ let
$\cU_n$ be the irreducible module of $\SL_2(\foo)$ of rank
$n+1$. Then $\slmod_n = \cU_n\otimes_\foo \ratk$ and $\bU_n := \cU_n \otimes_\foo \resk := \cU_n/\PP\cU_n$.  For any $\foo$-lattice $\cL$, let $L :=
\cL\otimes_\foo \ratk$ and $\bL := \cL\otimes_{\foo} \resk$. 

\begin{lemma}\label{lem:key}
  Suppose $\cL$ is an $\foo$-lattice with $\SL_2(\foo)$-action given by $\phi$.
  Then
  \[
    \cL \cong \bigoplus_{i< p}\cU_i \otimes_\foo \iicM
  \]
  where $\iicM = \Hom_{\SL_2(\foo)}(\cU_i, \cL)$ is an $\foo$-lattice in $\mults^i$.
\end{lemma}
\begin{proof}
  By the assumption on $p$, the $\SL_2(\resk)$-module $\bL$ 
  is semisimple, and therefore $\bL \cong \bigoplus_{i<p} \bU_i \otimes
  \Hom_{\SL_2(\resk)}(\bU_i, \bL)$.
 It now follows from \cite[Proposition 5.3.1]{McNinch2018} that this isomorphism lifts to the level of $\SL_2(\R)$-modules.
\end{proof}

\subsection{Centralizers} \label{SS:centralizers}
Under the hypotheses on $\ratk$, the centralizer $G^\phi$ of $\phi(\SL_2(\ratk))$ in $G$ coincides with $G^{\LieTrip}$, the stabilizer under the adjoint action of the Lie triple $\LieTrip$.   Let us describe the structure of $G^\phi$ for a large class of groups.

Let $D$ be a central division algebra over $\ratk$ and suppose $V$ is additionally a right $D$-module.  Then $\mults^i$ inherits the structure of a right $D$-module.  In fact, if $G=\GL(V,D)$ then $G^\phi \cong \prod_{i}\GL(\mults^i,D)$.  

Now suppose further that $V$ is equipped with a non-degenerate $G$-invariant sesquilinear form $F$.  Let  $\slf_i$ denote the the unique (up to scaling) $\sltwok$-invariant nondegenerate bilinear form on $\slmod_i$.  It is symplectic if $i$ is even and symmetric if $i$ is odd.  In this latter case, fix a choice of form such that $\slf_i(\cU_i,\cU_i) = \R$ and $q_{\slf_{i}} \cong \ang{1} \oplus \Hyp^{\oplus k}$ where $i=2k+1$.  

Then under the isomorphism \eqref{decompgen}, for each $i$, $F$ and $\slf_i$ induce a nondegenerate $G^\phi$-invariant form $\wtf^i$ on $\mults^i$ which satisfies, for each $u, u' \in \slmod_i$ and each $T,T'\in \mults^i$, that $F(T(u), T'(u'))=\slf_i(u,u')\wtf^i(T,T')$.  
It follows that if $G$ is the group $\rU(V,F)$ of isometries of $(V,F)$, then $G^\phi \cong \prod_i \rU(\mults^i,\wtf^i)$.

In the special case that $D=\ratk$ and $F$ is a symmetric bilinear form, we have that $\wtf^i$ is symplectic for each even $i$ and symmetric for each odd $i$.  Writing $O(q):=\rU(V,B_q)$ for any quadratic space $(V,q)$ and $\Sp(n):=\rU(V,F)$ for any symplectic space $(V,F)$ of dimension $n$, we have
\begin{equation}\label{E:cent}
\Orth(q)^\phi \cong \prod_{\text{$i$ odd: $\mults^i \neq 0$}}\Orth(q_{\wtf^i}) \times \prod_{\text{$i$ even: $\mults^i \neq 0$}} \Sp(\dim(\mults^i)).
\end{equation}

\subsection{Nilpotent orbits of orthogonal groups and algebras}
For the remainder of this section and until Section~\ref{S:functoriality}, let $D=\ratk$ and $F=B_q$ be symmetric, so that $(q,V)$ is an $n$-dimensional quadratic space over $\ratk$.   
The \emph{special orthogonal Lie algebra} is 
$$
\so(q) = \{X \in \Liesl(n,\ratk) \mid \lconj{t}X M_q + M_qX=0\}.
$$
Observe that $\so(q) = \so(\alpha q)$ for any $\alpha \in \ratk^\times$, so from Table~\ref{Table:Witt} we infer there is a single isomorphism class of Lie algebra for each anisotropic dimension, except for $\dim \cls{q}=2$.  In this latter case, by the Kneser-Tits classification \cite{Tits1979}, there are two isomorphism classes, corresponding to Lie algebras splitting over a ramified or an unramified extension respectively.  The \emph{orthogonal group} is
$$
\Orth(q) = \{g \in \GL(n,\ratk) \mid \lconj{t}gM_qg=M_q\}
$$
and it contains  $\SO(q)$ as the index-two subgroup of elements of determinant equal to $1$.  These groups are compact if and only if $q$ is anisotropic.  We think of them as the $\ratk$-points of the corresponding inner forms of the algebraic groups $\algO$ and $\algSO$, respectively.  

Given a geometric nilpotent orbit $\mathcal{O}$ under the algebraic group $\algOq \cong \algO$, then its set of rational points $\mathcal{O}(\ratk)$ may be empty, or may decompose as a union of one or more rational nilpotent orbits.  In the latter case, using the arguments of \cite[Prop 4.1]{Nevins2002}, one can deduce that the set of rational orbits is in bijection with 
the kernel of the map of pointed sets in Galois cohomology
\begin{equation}\label{E:ker}
\alpha \colon H^1(\ratk, \algOq^\phi) \to H^1(\ratk, \algOq)
\end{equation}
where $\LieTrip$ is an $\Liesl_2$-triple for a base point of $\mathcal{O}$ and $\algOq^\phi$ is its centralizer.  

By \eqref{E:cent}, the algebraic group $\algOq^\phi$ is a product of symplectic and orthogonal groups.  For a group $U$ preserving a nondegenerate $m$-dimensional bilinear form, $H^1(\ratk,U)$ counts the number of $\ratk$-isometry classes of forms of degree $m$; thus it is trivial if the form is symplectic, and if the form is symmetric, it has order $4$, $7$ or $8$ if $m$ is $1$, $2$ or at least $3$, respectively.   The kernel of the map \eqref{E:ker} can thus parametrized by tuples of quadratic forms whose sum is equivalent to the chosen form $q$ in the Witt group.
This correspondence is made explicit in the proof of the following theorem,  which is a known result; for example, 
for $\mathbb{R}$ and $\mathbb{C}$ see \cite[Ch. 9]{CMcG1993} and for extensions of $\mathbb{Q}_p$ see \cite[I.6]{Waldspurger2001}.  

\begin{theorem}\label{T:mainthm}
Let $(q,V)$ be a nondegenerate $n$-dimensional quadratic space over $\ratk$.  The nilpotent $\Orth(q)$ orbits on $\so(q)$ are parametrized by the set $\Nilp(\cls{q},n)$.  If $\cls{q}\neq \cls{0}$, then the nilpotent $\SO(q)$ orbits coincide with those under $\Orth(q)$ but otherwise, $n$ is even and the nilpotent $\SO(q)$-orbits are parametrized by the set $\Nilps(n)$.
\end{theorem}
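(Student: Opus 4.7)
The plan is to invoke the Galois cohomology framework set up just before the theorem. Fix a partition $\partt \in \Partt(n)$ and an $\Liesl_2$-triple $\LieTrip = \{Y,H,X\}$ in $\so(q)$ with $X$ of Jordan type $\partt$, assuming a rational representative exists. The corresponding $\Liesl_2$-representation decomposes $V$ as $V = \bigoplus_i (W_i \otimes S_i)$, where $S_i$ is the irreducible $\Liesl_2$-module of dimension $i$ and $W_i$ is the multiplicity space of dimension $m_i$. The form $q$ respects this decomposition and splits as a tensor product on each summand. Since the $\Liesl_2$-invariant bilinear form on $S_i$ (unique up to scalar) is symmetric for odd $i$ and alternating for even $i$, the induced form on $W_i$ is a quadratic form $q_i$ when $i$ is odd and a symplectic form when $i$ is even (which forces $m_i$ to be even). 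This yields the centralizer decomposition
\begin{equation}\label{E:cent}
\algOq^\LieTrip \cong \prod_{i \text{ odd}} \mathbf{O}(q_i) \times \prod_{i \text{ even}} \Sp(W_i).
\end{equation}

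Combining \eqref{E:cent} with the cited $H^1$ computations, I would identify $H^1(\ratk, \algOq^\LieTrip)$ with $\Qtup_\partt$, since each symplectic factor contributes trivially and each factor $\mathbf{O}(q_i)$ contributes the set of isometry classes of $m_i$-dimensional quadratic forms. The twist of $q$ by $\qtup = (q_i')_{i \text{ odd}}$ replaces each $q_i$ in \eqref{E:cent} with the isometric $q_i'$, and $\alpha$ returns the total resulting quadratic space on $V$. In the Witt group, each even-part summand $(W_i,\omega_i)\otimes S_i$ is hyperbolic, as the tensor product of two alternating forms on even-dimensional spaces, while each odd-part summand $q_i' \otimes h_i$ has Witt class $\cls{q_i'}$, because the standard $\Liesl_2$-invariant form $h_i$ on $S_i$ (for odd $i$) has Witt class $\cls{1}$ and tensoring with $\cls{1}$ is the identity in $\W_\ratk$. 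Thus $\alpha(\qtup) = \cls{\qtup}$, and its kernel, the fiber over $\cls{q}$, is in natural bijection with $\Nilp(\cls{q},n)$, proving the $\Orth(q)$ case.

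For $\SO(q)$, I would use the Jacobson--Morozov factorization $\Orth(q)^X = \algOq^\LieTrip(\ratk) \cdot U$ for a unipotent group $U$ of trivial determinant, so the $\Orth(q)$-orbit of $X$ splits into two $\SO(q)$-orbits exactly when $\det(\algOq^\LieTrip(\ratk)) = \{1\}$. From \eqref{E:cent}, an element $(g_i)\times(s_i)$ acts on $V$ with determinant $\prod_{i \text{ odd}} \det(g_i)^i \cdot \prod_{i \text{ even}} \det(s_i)^i = \prod_{i \text{ odd}} \det(g_i)$, which surjects onto $\{\pm 1\}$ exactly when $m_i \geq 1$ for some odd $i$. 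Hence the orbit splits iff $\partt \in \Partte(n)$, and in that case $V$ is a sum of tensors of two symplectic spaces, forcing $q$ to be hyperbolic (equivalently, $\cls{q}=\cls{0}$ and $n$ even). Therefore, when $\cls{q}\neq\cls{0}$ no very even partition contributes and the $\SO(q)$-orbits coincide with the $\Orth(q)$-orbits, while when $\cls{q}=\cls{0}$ each very even partition contributes an additional $\SO(q)$-orbit, yielding $\Nilps(n)$. The most delicate step is the Witt-group computation in paragraph two; once the $\Liesl_2$-invariant forms $h_i$ are normalized so that the odd-dimensional tensors reduce cleanly to $\cls{q_i'}$, the remaining product-over-parts analysis and the determinant count for the $\SO$ splitting are routine.
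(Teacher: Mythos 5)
Your proof is correct and reaches the same structural conclusions as the paper, but it runs more explicitly through the Galois-cohomology machinery, whereas the paper's own proof bypasses it in favor of a direct bijection. Concretely, the paper sends each Lie triple to the isometry class of the induced decomposition $V\cong \bigoplus_i \slmod_i\otimes \mults^i$ with the tensor-factor forms $(\slf_i,\wtf^i)$, observes that $\Orth(q)$-conjugacy of triples is precisely isometry of such decompositions, and then checks that the isometry classes of decompositions are enumerated by $\Nilp(\cls{q},n)$; the cohomology discussion before the theorem is offered as motivation rather than as the mechanism. Your route instead computes $H^1(\ratk,\algOq^{\LieTrip})$ via the centralizer decomposition and identifies the map $\alpha$ with Witt-sum, so that the kernel becomes the fiber of $\cls{\cdot}$ over $\cls{q}$. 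Both arguments hinge on the same two inputs: the centralizer decomposition \eqref{E:cent} (with symplectic factors contributing trivially to $H^1$ and determinant), and the normalization $\cls{\slf_{2k+1}}=\cls{\langle 1\rangle}$ so that tensoring by $\slf_i$ preserves Witt class when $i$ is odd. The paper makes that normalization explicit and correspondingly pins down $\wtf^i$ up to isometry; in your write-up this is acknowledged as the delicate step and deferred, but is handled the same way. Your $\SO(q)$ argument via $\det(\Orth(q)^X)=\det(\algOq^{\LieTrip}(\ratk))$ is equivalent to the paper's argument, which instead starts from a conjugating element $g$ of determinant $-1$ and asks whether it can be corrected by an element of $\Orth(q)^\phi$; both reduce to the surjectivity of $\det$ on the odd orthogonal factors. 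The one thing your outline leaves implicit is existence: you begin "assuming a rational representative exists," while the paper also records that each element of $\Nilp(\cls{q},n)$ actually gives rise to such a decomposition, so the correspondence is a genuine bijection rather than just an injection. In the cohomological framework this surjectivity is automatic from the general theory once $\mathcal{O}(\ratk)\neq\emptyset$, but since you are ranging over all partitions it is worth a sentence to close the loop, as the paper does.
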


\begin{proof}
Let $X\in \so(q)\setminus\{0\}$ be nilpotent and let $\LieTrip = \{Y,H,X\}$ be a corresponding Lie triple.  Decompose $V$ into isotypic components under the corresponding action of $\sltwok$ (or equivalently under its lift $\phi(\SL_2(\ratk))$) as in \eqref{decompgen}; then the nonzero summands are indexed by parts $i$ of a partition $\partt$ of $n$ in which each part $i$ occurs with multiplicity $m_i = \dim(\mults^i)$.  Hence $\partt\in \Partt(n)$, since even parts correspond to symplectic forms.
If $X=\{0\}$ set $\LieTrip=\{0\}$ and $\mults^1=V$ in \eqref{decompgen}, so that $\partt=[1^n]$.  
When $i$ is an odd part of $\partt$, we have $q_{\slf_i}\simeq \langle 1\rangle$.  Thus writing $q_i := q|_{V_i}$ and noting that $q_{\slf_i}\otimes q_{\wtf^i} \cong q_i$, we conclude that $(q_{\wtf^i},\mults^i) \simeq (q_i,V_i)$ in $\W_\ratk$.  On the other hand, when $i$ is an even part of $\partt$, $\slf_i\otimes \wtf^i$ is a split quadratic space, whence $(q_i,V_i) \simeq \Hyp$ in $\W_\ratk$.

%

Consequently, the tuple $\qtup = [q_{\wtf^j} \mid \text{$j$ odd, $m_j \neq 0$}]$ lies in $\Qtup_\partt$.  Since $\oplus_{i\in \N}q_i \simeq q$, the pair $(\partt,\qtup)$ lies in $\Nilp(\cls{q},n)$.  Since $G^\phi$ acts by isometries on the spaces $\mults^i$, this map $\LieTrip \to \Nilp(\cls{q},n)$ lifts to a well-defined map on nilpotent orbits.  It is surjective, since each element of $\Nilp(\cls{q},n)$ defines a decomposition \eqref{decompgen} of $(q,V)$ (uniquely up to isometry) and consequently a subgroup of $\Orth(q)$ isomorphic to $\SL_2(\ratk)$.



To show this map is bijective, suppose $\LieTrip$ and $\LieTrip'$ are two Lie triples in $\so(q)$ and $V = \oplus_i V_i$ and $V = \oplus_i V_i'$ are the corresponding decompositions of $V$ into isotypic components.  Then these decompositions are isometric if and only if $\LieTrip$ and $\LieTrip'$ are conjugate via an element of $\Orth(q)$.  As $\Nilp(\cls{q},n)$ is in bijection with the set of isometry classes of such decompositions, the first statement of the theorem follows.

To understand the $\SO(q)$ orbits, suppose that $g\in \Orth(q) \setminus \SO(q)$ gives  $\Ad(g)\LieTrip = \LieTrip'$.  
From \eqref{E:cent}, and that the symplectic factors have determinant $1$, we conclude that $\Orth(q)^\phi$ contains an element $h$ of determinant $-1$ if and only if $\partt$ contains at least one odd part, in which case $gh \in \SO(q)$ and $\Ad(gh) \LieTrip = \LieTrip'$, showing that the $\Orth(q)$ and $\SO(q)$ orbits coincide.

If, however, $\dim(V_i)$ is even for all $i$, then no such $h$ exists.  In this case, each $V_i$ is a split quadratic space and so $(q,V)$ is a sum of hyperbolic planes, whence $\cls{q}=0$.  Since $\Orth(q)^\phi = \SO(q)^\phi$ in this case, and $\SO(q)$ has index two in $\Orth(q)$, we deduce that each of the $\Orth(q)$-orbits corresponding to $\partt\in \Partte(n)$ decompose as a disjoint union of two $\SO(q)$-orbits.
\end{proof} 

Note that for $i$ odd, where convenient, we can and do identify $\mults^i$ with the $0$-weight space of $V_i$.

\section{Counting rational nilpotent orbits}\label{S:count}
Let $\partt \in \Partt(n)$. 
For $i \in \mathbb{N}$ let $\ai$ denote the number of odd parts $j$ of $\partt$ occurring with multiplicity exactly $i$, and set $\co=\sum_{i=3}^n \ai$. 
Let $X$ be an element of the algebraic orbit $\mathcal{O}_\partt$, $\LieTrip$ an associated Lie triple, and $\phi$ the corresponding homomorphism $\phi \colon \SL_2\to \algOq$.
From the form of $\algOq^\phi$ in \eqref{E:cent}, and Lemma~\ref{L:invariant}, we deduce that
$$
\vert H^1(\ratk, \algOq^\phi) \vert = 4^{\ao}7^{\bo}8^{\co},
$$
whereas $\vert H^1(\ratk,\algOq)\vert \in \{4,7,8\}$, depending on $n$.   Thus from the discussion preceding the statement of Theorem~\ref{T:mainthm}, if $n\geq 3$ one expects for each choice of $\ratk$-form $\algOq$ about $\frac18(4^{\ao}7^{\bo}8^{\co})$ rational orbits in $\mathcal{O}_\partt(\ratk)$, with some variation depending on $\partt$ and $q$.

On the other hand, Theorem~\ref{T:mainthm} gives a direct means of counting the number of rational orbits in $\mathcal{O}_\partt(\ratk)$: they are parametrized by $\Ppt=\{ \qtup \in \Qtup_{\partt}\mid  \cls{\qtup} = \cls{q}\}$. 
That is to say, it suffices to count the number of isometry classes of tuples (of degrees prescribed by the multiplicities of the odd parts in $\partt$) that represent $\cls{q}$.  This is a nontrivial counting problem, and the subject of this section.

We begin with the simple case that each odd part of $\partt$ has multiplicity equal to $1$. 

\begin{lemma} \label{L:1dcase}
Let $\aw\in \W_\ratk$ and let $\ani$ be its anisotropic dimension $\ani = \dim(\aw)$.  Let $\ao\in\mathbb{N}_+$ have the same parity as $\ani$.
The number $\Num{\ao}{\aw}$ of isometry classes of $\ao$-tuples of degree-one quadratic forms representing $\aw$ is
\begin{equation} \label{n1}
\Num{\ao}{\aw} = \frac18 4^{\ao} + (2-\ani)2^{\ao-2}.
\end{equation}
\end{lemma}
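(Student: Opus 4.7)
The plan is to reduce the count to a combinatorial sum via the decomposition $\W_\ratk \cong \W_\resk \times \W_\resk$ from Section~\ref{S:notation}, then do a case analysis.

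An $\ao$-tuple of degree-one quadratic forms corresponds to an element of $(\ratk^\times/(\ratk^\times)^2)^\ao = \{1, \rho, \varpi, \rho\varpi\}^\ao$, giving $4^\ao$ isometry classes in total. Writing $\aw = (\aw_u, \aw_\varpi) \in \W_\resk \times \W_\resk$, the classes $\ang{1}, \ang{\rho}$ lie in the first factor while $\ang{\varpi}, \ang{\rho\varpi}$ lie in the second. Classifying each tuple by which of its $\ao$ entries are ``unit'' classes (from $\{\ang{1}, \ang{\rho}\}$) yields
\[
\Num{\ao}{\aw} = \sum_{k=0}^{\ao} \binom{\ao}{k} N_k(\aw_u) N_{\ao-k}(\aw_\varpi),
\]
where $N_k(x)$ counts ordered $k$-tuples from $\{\ang{1}, \ang{\rho_\resk}\}$ summing to $x \in \W_\resk$.

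Next I compute $N_k(x)$ by direct enumeration in each case: for $\scno$, $\W_\resk \cong (\Z/2\Z)^2$ with both generators of order $2$, so the sum of a $k$-tuple with $j$ copies of $\ang{1}$ depends only on the parities of $j$ and $k-j$; for $\noar$, $\W_\resk \cong \Z/4\Z$ with $\ang{\rho_\resk} = -\ang{1}$, so the sum equals $(2j-k) \bmod 4$. In both cases, summing binomial coefficients yields the uniform formula $N_k(x) = 2^{k-1}$ whenever $k \geq 1$ and $\dim(x) \equiv k \pmod 2$, with $N_0(0) = 1$ and zero otherwise.

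Finally, using that the decomposition preserves anisotropic dimension so $\ani = \dim(\aw_u) + \dim(\aw_\varpi)$, and the hypothesis that $\ani \equiv \ao \pmod 2$, I substitute and simplify. Separating boundary terms ($k = 0$ contributes $2^{\ao-1}$ when $\aw_u = 0$, and analogously for $k = \ao$ when $\aw_\varpi = 0$) from interior terms (each contributing $\binom{\ao}{k} 2^{\ao-2}$ for $k$ of the correct parity in $(0, \ao)$), and applying $\sum_{k \equiv r \!\!\pmod{2}} \binom{\ao}{k} = 2^{\ao - 1}$, the main term $\tfrac{1}{8} 4^\ao$ separates from a correction which must equal $(2 - \ani) 2^{\ao - 2}$. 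The main obstacle is this final case analysis: the correction has to emerge uniformly across subcases determined by $\dim(\aw_u), \dim(\aw_\varpi) \in \{0,1,2\}$, with the boundary contributions (capturing whether $\aw_u$ or $\aw_\varpi$ vanishes) varying between subcases, but each is a short arithmetic check.
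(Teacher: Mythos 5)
Your argument is correct and takes a genuinely different route from the paper's. The paper establishes \eqref{n1} by induction on $\ao$: it verifies $\ao=1,2$ directly, then strips a pair $(q_1,q_2)$ from the tuple and analyzes, for each anisotropic dimension of $\aw$, how many of the sixteen square-class pairs push the residual class $\bw=\aw-\cls{\la q_1,q_2\ra}$ into each anisotropic dimension; this mirrors the inductive peeling used again in Lemma~\ref{L:2dcase} and Theorem~\ref{T:count}. You instead exploit the isomorphism $\W_\ratk\cong\W_\resk^2$ to split each entry into its unit and $\varpi$-parts, reduce to a binomial convolution of residue-field counts, and observe the uniform formula $N_k(x)=2^{k-1}$ (for $k\geq 1$ and $\dim(x)\equiv k\bmod 2$) in both residue cases. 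I checked your closing step: writing $d_u=\dim(\aw_u)$, $d_\varpi=\dim(\aw_\varpi)$, the $k=0$ and $k=\ao$ boundary terms contribute $[\aw_u=0]\,2^{\ao-1}+[\aw_\varpi=0]\,2^{\ao-1}$, the interior contributes $2^{\ao-2}\left(2^{\ao-1}-[d_u\text{ even}]-[d_\varpi\text{ even}]\right)$, and the needed identity $2([\aw_u=0]+[\aw_\varpi=0])-([d_u\text{ even}]+[d_\varpi\text{ even}])=2-\ani$ holds in all nine subcases for $(d_u,d_\varpi)\in\{0,1,2\}^2$; the $\ao=1$ edge case (empty interior range) is also consistent. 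Your route gives a closed-form derivation that makes transparent why the answer depends only on $\ani$ and not on the finer Witt class; the paper's induction is terser and primes the parallel argument for degree-two forms.
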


\begin{proof}  For ease of notation set $\oap:=\ao$.  
We prove this formula by induction on even and odd $\oap$, respectively.   When $\oap=1$, we have $N(\aw)_1=1$ if $\ani=1$ but $N(\aw)_1=0$ if $\ani=3$, so  \eqref{n1} holds.  When $\oap=2$, there are $16$ distinct isometry classes of pairs of quadratic forms.  By Lemma~\ref{L:invariant}, regardless of the sign of $-1$ in $\ratk$, each of the six anisotropic quadratic forms $\aw$ with $\ani=2$ is represented by exactly two such pairs, accounting for $12=6\times 2$ pairs;  the remaining four pairs represent the hyperbolic plane (which has $\ani=0$). In particular no pair can represent $\Quat$ (which has $\ani=4$).  This count agrees with \eqref{n1} for $\oap=2$ and each $\ani\in \{0,2,4\}$.  Thus \eqref{n1} holds for $\oap\in \{1,2\}$ and all $\aw$ with $\ani$ of the same parity as $\oap$.

Suppose now that $\oap>2$ and that $\Num{\oap-2}{\bw}$ satisfies \eqref{n1} for all $\bw \in \W_\ratk$ such that $\anip:=\dim(\bw)$ has the same parity as $\oap$.  In particular, since the right side of \eqref{n1} depends only on the anisotropic dimension, we may define $\Num{\oap-2}{\anip}:=\Num{\oap-2}{\bw}$.  

Let $\aw \in \W_\ratk$ and suppose it is represented by a $\oap$-tuple of degree-one quadratic forms $\qtup_{\oap} = [q_1,q_2,\qtup_{\oap-2}]$ where $\qtup_{\oap-2}$ denotes an $(\oap-2)$-tuple.  Set $\bw = \cls{\qtup_{\oap-2}}$; then $\bw=\aw-\cls{\la q_1,q_2\ra} \in \W_\ratk$.    
Set $\ani=\dim(\aw)$ and $\anip=\dim(\bw)$; then necessarily $\anip \in \{\ani, \ani\pm 2\} \cap \{0,1,2,3,4\}$.

Suppose first that $\ani \in \{0,4\}$.  There are four pairs that yield $\la q_1,q_2\ra \simeq \Hyp$ (and hence give $\bw=\aw$ and thus $\anip = \ani$) whereas the twelve others give $\anip=2$, yielding
\begin{align*}
\Num{\oap}{\aw} &= 4\Num{\oap-2}{\aw} + 12\Num{\oap-2}{2} \\ &= 4(\frac18 4^{\oap-2} + (2-\ani)2^{\oap-4}) + 12(\frac18 4^{\oap-2})
= \frac18 4^{\oap} + (2-\ani)2^{\oap-2}.
\end{align*}
Next suppose that $\ani=2$.  Then for each $\bw \in \{\Hyp,\Quat\}$ we have that $\dim(\aw-\bw)=2$, so $\aw-\bw$ is represented by exactly two choices of pairs $(q_1, q_2)$.  Thus the remaining $12$ choices of $(q_1,q_2)$ correspond to $\bw$ such that $\anip=2$.  This yields
$$
\Num{\oap}{\aw} = 2\Num{\oap-2}{0}+12\Num{\oap-2}{2}+2\Num{\oap-2}{4} = 16(\frac18 4^{\oap-2}) = \frac184^{\oap},
$$
as required.  

Finally, suppose $\ani\in \{1,3\}$.    If $\bw=\aw$, then we must have $\ang{q_1,q_2}\simeq \Hyp$; this accounts for $4$ pairs $(q_1,q_2)$.  For each of the three other elements $\bw$ of the same anisotropic dimension as $\aw$, a quick calculation using Table~\ref{Table:Witt} and Lemma~\ref{L:invariant} yields that $\dim(\aw-\bw)=2$ and so $\aw-\bw$ is representable by exactly two choices of $(q_1,q_2)$.  This accounts for $3\times 2=6$ pairs. The remaining six choices of $(q_1,q_2)$ therefore yield $\bw$ such that $\anip\neq \ani$, so necessarily $\anip=4-\ani$.  We thus infer 
\begin{align*}
\Num{\oap}{\aw} &= 4\Num{\oap-2}{\aw} + 6\Num{\oap-2}{\ani} + 6\Num{\oap-2}{4-\ani} \\&= 10(\frac18 4^{\oap-2} + (2-\ani)2^{\oap-4}) + 6(\frac18 4^{\oap-2} + (\ani-2)2^{\oap-4}),
\end{align*}
and the formula follows.
\end{proof}

Now consider the case that each odd part of $\partt$ has multiplicity exactly two.

\begin{lemma} \label{L:2dcase}
Let $\aw\in \W_\ratk$ be such that $\dim(\aw)=\ani \in \{0,2,4\}$, and let $\bo\geq 0$.  
Then the number $\Numb{\bo}{\aw}$ of isometry classes of $\bo$-tuples of degree-two quadratic forms representing $\aw$ is
$$
\Numb{\bo}{\aw} = 
\begin{cases}
\lfloor \frac18 7^\bo \rceil +1 & \text{if $\bo$ is even and $\ani=0$};\\
\lfloor \frac18 7^\bo \rceil -1 & \text{if $\bo$ is odd and $\ani=4$};\\
\lfloor \frac18 7^\bo \rceil  & \text{otherwise}.
\end{cases}
$$
where $\lfloor \frac18 7^\bo \rceil = \frac18(7^\bo-(-1)^\bo)$ denotes the closest integer to $7^\bo/8$.
\end{lemma}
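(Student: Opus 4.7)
The plan is to induct on $\bo$, following the strategy of Lemma~\ref{L:1dcase}. Writing a $\bo$-tuple representing $\aw$ as $[q_1, \qtup_{\bo-1}]$, the first form $q_1$ ranges over the $7$ isometry classes of degree-two forms (one hyperbolic and six anisotropic, by Lemma~\ref{L:invariant}), and the residual tuple $\qtup_{\bo-1}$ represents $\bw := \aw - \cls{q_1}$. The inductive hypothesis applies once I determine, for each $\aw$, how the $7$ choices of $q_1$ distribute over the possible values of $\anip := \dim(\bw) \in \{0,2,4\}$. For the base cases, $\Numb{0}{\Hyp} = 1$ and $\Numb{0}{\aw} = 0$ otherwise; and for $\bo = 1$ the seven degree-two classes hit $\anip = 0$ once and $\anip = 2$ six times, which matches the formula.

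For the inductive step, I exploit two structural facts about $\W_\ratk$: first, $\Hyp$ and $\Quat$ are the unique classes of anisotropic dimension $0$ and $4$ respectively, and second, $2\Quat = \cls{0}$ (immediate from $\Quat \cong \ang{1,-1,\p,-\p}$). If $\ani = 0$, then only $q_1 = \Hyp$ yields $\anip = 0$, no choice yields $\anip = 4$ (which would require $\cls{q_1} = \Quat$, impossible since $q_1$ has degree $2$), and the remaining six yield $\anip = 2$. The case $\ani = 4$ is symmetric. If $\ani = 2$, then among the six anisotropic choices $c$ for $q_1$: exactly one gives $\bw = \Hyp$ (namely $c = \aw$), and exactly one gives $\bw = \Quat$ (namely $c = \aw - \Quat$, which has $\dim = 2$: not $0$ since $\aw \neq \Quat$, and not $4$ since $\aw - \Quat = \Quat$ would force $\aw = 2\Quat = \cls{0}$); the remaining four such $c$'s, together with $q_1 = \Hyp$, give $\anip = 2$. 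Writing $A_\bo, B_\bo, C_\bo$ for $\Numb{\bo}{\aw}$ when $\dim(\aw) = 0, 2, 4$ respectively, this yields the recursions
\[
A_\bo = A_{\bo-1} + 6 B_{\bo-1}, \qquad B_\bo = A_{\bo-1} + 5 B_{\bo-1} + C_{\bo-1}, \qquad C_\bo = 6 B_{\bo-1} + C_{\bo-1}.
\]

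Substituting the asserted closed-form expressions and using the identity $7 L_{\bo-1} = L_\bo + (-1)^\bo$, where $L_\bo := (7^\bo - (-1)^\bo)/8 = \lfloor 7^\bo / 8 \rceil$, each recursion reduces to a routine parity-tracked verification that closes the induction. The main obstacle is the case $\dim(\aw) = 2$, which couples all three quantities $A$, $B$, and $C$ and depends on the structural observations about the even-dimensional subgroup of $\W_\ratk$ above; the other two cases decouple cleanly from $C$ and $A$ respectively.
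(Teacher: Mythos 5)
Your proof is correct and follows essentially the same inductive strategy as the paper: the same decomposition $[q_1,\qtup_{\bo-1}]$, the identical case split on $\ani\in\{0,2,4\}$ producing the same three recursions, and the same algebraic closing via $7L_{\bo-1}=L_\bo+(-1)^{\bo}$. Your explicit justification that $\aw$ and $\aw-\Quat$ are distinct degree-two anisotropic classes when $\ani=2$ (using $2\Quat=\cls{0}$) is a welcome bit of rigor that the paper leaves implicit, but it is not a different route.
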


\begin{proof}
Set $\obp:=\bo$.  
We can write the formula as
$
\Numb{\obp}{\aw} = \frac18 (7^\obp-(-1)^\obp) + \ep_{\ani,\obp},
$ where 
\begin{equation} \label{epi}
\ep_{\ani,\obp} = \begin{cases} (-1)^\obp &\text{if $\ani=0$ and $\obp$ is even, or $\ani=4$ and $\obp$ is odd, and}\\
0 & \text{otherwise}. \end{cases}
\end{equation}
Notice that if $\ani\in \{0,4\}$ then $\ep_{\ani,\obp-1}+(-1)^\obp=\ep_{\ani,\obp}$, for all $\obp$.

When $\obp=0$, then $\Numb{0}{\cls{0}}=1$ and $\Numb{0}{\aw}=0$ for all $\aw\neq \cls{0}$ so the formula holds. 
Assume $\obp\geq 1$ and let us count the number of ways, up to isometry,  to construct a $\obp$-tuple of degree-two quadratic forms $\qtup_\obp = [q_1,\qtup_{\obp-1}]$ representing $\aw$. 
There are $7$ choices for the form $q_1$, of which $6$ are anisotropic.   
Set $\bw=\cls{\qtup_{\obp-1}}=\aw-\cls{q_1}$ and let $\anip=\dim(\bw)$. By induction $\Numb{\obp-1}{\bw}$ is an invariant of anisotropic dimension so we can set $\Numb{\obp-1}{\anip}:=\Numb{\obp-1}{\bw}$.

Suppose $\ani\in \{0,4\}$.  If $\cls{q_1}=\cls{0}$ then $\bw=\aw$ and $\anip=\ani$; otherwise, $\anip=\dim(\bw)=\dim(\aw - \cls{q_1})=2$.  Therefore we have
\begin{align*}
\Numb{\obp}{\aw} &= \Numb{\obp-1}{\aw} + 6\Numb{\obp-1}{2} \\
&= \frac18 (7^{\obp-1}-(-1)^{\obp-1}) + \ep_{\ani,\obp-1} + 6(\frac18 (7^{\obp-1}-(-1)^{\obp-1}))\\
&= \frac18 (7^\obp - (-1)^\obp) + (-1)^\obp+ \ep_{\ani,\obp-1} = \frac18 (7^\obp - (-1)^\obp)+\ep_{\ani,\obp}.
\end{align*}
On the other hand, if $\ani=2$, then $\bw=\Hyp$ if $\cls{q_1}=\aw$ and   $\bw=\Quat$ if
 $\cls{q_1} = \Quat-\aw$.  Each of the remaining five choices of $\cls{q_1}$ gives $\bw$ such that $\anip=2$.  This yields the final relation
\begin{align*}
\Numb{\obp}{\aw} &= \Numb{\obp-1}{\Hyp} + 5\Numb{\obp-1}{2} + \Numb{\obp-1}{\Quat}\\
&= 7(\frac18 (7^{\obp-1}-(-1)^{\obp-1})) + \ep_{0,\obp-1} + \ep_{4,\obp-1}.
\end{align*}
Since $\ep_{0,\obp-1} +\ep_{4,\obp-1} = (-1)^{\obp-1}$, the formula follows.
\end{proof}


\begin{theorem} \label{T:count}
Let $M$ be a vector space of dimension $n\geq 1$ and let $A$ be an index set.  Suppose $M$ is decomposed as a direct sum of nonzero subspaces $\oplus_{\alpha\in A}M_\alpha$; set $\ai = \vert \{\alpha \mid \dim(M_\alpha)=i\}\vert$.
Let $\aw\in \W_\ratk$ and let $\ani =\dim(\aw)$ denote its anisotropic dimension.  If $n-\ani$ is a nonnegative even integer, then the number of ways $\Numc{\ao}{\bo}{\co}{\aw}$ of assigning a nondegenerate quadratic form to each of the $\vert A \vert$ subspaces such that the sum is equivalent to $\aw$ in the Witt group is
$$
\Numc{\ao}{\bo}{\co}{\aw} = \begin{cases}
\frac18 4^{\ao} 7^{\bo} 8^{\co} & \text{if $\co\geq 1$};\\
\frac18 4^{\ao} 7^{\bo}+ (2-\ani)2^{\ao-2} & \text{if $\co=0$, $\ao\geq 1$};\\
\lfloor \frac18 7^{\bo}\rceil + \ep_{\ani,\bo} & \text{if $\ao=\co=0$},
 \end{cases}
$$
where $\ep_{\ani,\bo}$ was defined in \eqref{epi}.  
\end{theorem}

\begin{proof}
Set $\co=\sum_{i=3}^n\ai$.
First suppose that there exists some $\alpha \in A$ such that $\dim(M_\alpha)\geq 3$.  Then the number of choices of quadratic forms on $M'=\oplus_{\beta \neq \alpha}M_\beta$ is $4^{\ao}7^{\bo}8^{\co-1}$.  Given such a choice, let $\bw$ be its Witt class.  Then $\dim(\aw-\bw)$ has the same parity as $\dim(M_\alpha)$.  Since $\dim(M_\alpha)\geq 3$, each of the 8 possible choices of $\aw-\bw$ can be realized on $M_\alpha$.  The formula follows.

Now suppose that $\co=0$, so that $\dim(M_\alpha)\in \{1,2\}$ for all $\alpha \in A$.  If $\bo=0$ or $\ao=0$ then we apply Lemmas~\ref{L:1dcase} and \ref{L:2dcase}, respectively.  Otherwise, letting $\W_2$ denote the subgroup of the Witt group of all quadratic forms of even anisotropic dimension, we deduce that
\begin{align}\label{E:summands}
\Numc{\ao}{\bo}{0}{\aw} &= \sum_{\bw \in \W_2}\Num{\ao}{\aw-\bw}\Numb{\bo}{\bw} =\sum_{\bw\in \W_2}\Num{\ao}{\aw-\bw} \left(\left\lfloor \frac18 7^{\bo} \right\rceil + \ep_{\dim(\bw),\bo}\right)\notag\\
&= 4^{\ao}\left\lfloor \frac18 7^{\bo} \right\rceil +  \sum_{\bw\in \W_2}\Num{\ao}{\aw-\bw}\ep_{\dim(\bw),\bo}
\end{align}
where at this last step we have used that $\aw-\bw$ ranges over \emph{all} Witt classes of quadratic forms of dimension of the same parity as $\dim(\aw)$, and thus all $4^{\ao}$ possible $\ao$-tuples of degree-one quadratic forms.

When $\bo$ is even, $\ep_{\dim(\bw),\bo}$ is nonzero only when $\bw=\cls{0}$, in which case $\ep_{0,\bo}=1$, so the final summand is
$$
\ep_{0,\bo}\Num{\ao}{\aw} = \frac184^{\ao} + (2-\ani)2^{\ao-2} 
$$
whereas when $\bo$ is odd, the only nonzero factor is $\ep_{4,\bo}=-1$ and the term corresponding to $\bw=\Quat$ has $\dim (\aw-\bw) = 4-\ani$; this yields 
$$
\ep_{4,\bo}\Num{\ao}{\aw-\Quat} = (-1) \cdot \left(\frac184^{\ao} + \left( 2-(4-\ani)\right)2^{\ao-1}\right) = -\frac184^{\ao} + (2-\ani)2^{\ao-2}.
$$
Thus the final summand in \eqref{E:summands} is precisely $(-1)^{\bo}\frac184^{\ao}+ (2-\ani)2^{\ao-2}.$  Expanding $\left\lfloor \frac18 7^{\bo} \right\rceil$ as in Lemma~\ref{L:2dcase}, we obtain, for $\ao,\bo>0$ and $\co=0$, 
$$
\Numc{\ao}{\bo}{0}{\aw} = 4^{\ao} \frac18 (7^{\bo}-(-1)^{\bo})  + (-1)^{\bo} \frac184^{\ao} + (2-\ani)2^{\ao-2},
$$
as required.
\end{proof}

By Theorem~\ref{T:mainthm}, the $\Orth(q)$ orbits in $\mathcal{O}_\partt(\ratk)$ are in bijection with $$\Ppt:=\{ \qtup \in \Qtup_{\partt}\mid  \cls{\qtup} = \cls{q}\}.$$
Therefore rephrasing Theorem~\ref{T:count} gives the desired result.


\begin{corollary}
Let $q$ be a nondegenerate quadratic form on an $n$-dimensional space $V$ and let $\partt \in \Partt(n)$. Denote by $\mathcal{O}_\lambda$ the corresponding algebraic nilpotent adjoint orbit of $\algOq$.  Write $\ai$ for the number of odd parts of multiplicity exactly $i$ in $\partt$ and $\co:= \sum_{i=3}^n \ai$. 
Then the number of $\Orth(q)$-orbits in $\mathcal{O}_{\partt}(\ratk)$ is
$\vert \Ppt \vert = 
 \Numc{\ao}{\bo}{\co}{\cls{q}}.
$
\end{corollary}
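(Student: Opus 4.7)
The plan is to first identify the parametrizing set with the combinatorial object counted in Theorem~\ref{T:count}, and then invoke that theorem directly. The only substantive check is a parity compatibility between the data of $\partt$ and the Witt class $\cls{q}$.

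First, by Theorem~\ref{T:mainthm}, the $\Orth(q)$-orbits in $\mathcal{O}_\partt(\ratk)$ are in bijection with the set of pairs $(\partt,\qtup) \in \Nilp(\cls{q},n)$ with first coordinate equal to $\partt$, which unwinds to $\{\qtup \in \Qtup_\partt \mid \cls{\qtup}=\cls{q}\}$. This gives the first equality of the corollary without further work.

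Next, I unwind the definition of $\Qtup_\partt$. Let $j_1<\cdots<j_s$ be the distinct odd parts of $\partt$, and group the indices $\{1,\ldots,s\}$ by multiplicity: those $i$ with $m_{j_i}=1$, $m_{j_i}=2$, and $m_{j_i}\geq 3$, of cardinalities $\ao=X_1$, $\bo=X_2$, and $\co=\sum_{i\geq 3}X_i$, respectively. An element of $\Qtup_\partt$ is exactly an assignment of a nondegenerate quadratic form of degree $m_{j_i}$ to each index $i$. Thus counting elements of $\Qtup_\partt$ with $\cls{\qtup}=\cls{q}$ is precisely the combinatorial problem addressed by Theorem~\ref{T:count}, with underlying total dimension $N=\sum_{i=1}^s m_{j_i}$ and target Witt class $\aw=\cls{q}$ of anisotropic dimension $\ani$.

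The one step requiring a small verification is the parity hypothesis of Theorem~\ref{T:count}, namely that $N-\ani$ be a nonnegative even integer. For this, observe that $\partt \in \Partt(n)$ means even parts have even multiplicity, so $\sum_{j\text{ even}} m_j j$ is even, and hence $n \equiv \sum_{j\text{ odd}} m_j j \equiv \sum_{j \text{ odd}} m_j = N \pmod 2$. Since the anisotropic dimension satisfies $\ani \equiv n \pmod 2$, we conclude $N \equiv \ani \pmod 2$. Nonnegativity $N\geq \ani$ is automatic because $\cls{\qtup}=\cls{q}$ forces $N\geq\dim(\cls{\qtup})=\ani$; in the edge case $N<\ani$ the counting set is empty, which is consistent with the formulas (e.g.\ $\Numc{\ao}{\bo}{0}{\cls{q}}$ with $\ao=\bo=0$ and $\ani>0$ gives $\lfloor\tfrac18\rceil=0$). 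With the hypothesis verified, Theorem~\ref{T:count} yields $|\{\qtup\in\Qtup_\partt\mid\cls{\qtup}=\cls{q}\}|=\Numc{\ao}{\bo}{\co}{\cls{q}}$, as required.

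There is no real obstacle here: the content is entirely in Theorems~\ref{T:mainthm} and \ref{T:count}, and the corollary is a translation between the partition combinatorics and the tuple-of-forms combinatorics. The only care needed is in matching the grouping of odd parts by multiplicity with the $(\ao,\bo,\co)$ bookkeeping of Theorem~\ref{T:count} and in checking the parity compatibility above.
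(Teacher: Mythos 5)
Your proof is correct and follows the same route the paper takes — the paper itself offers no explicit proof beyond the remark that the corollary is a rephrasing of Theorem~\ref{T:count}, relying on the discussion preceding Theorem~\ref{T:mainthm} that the orbits are parametrized by $\Ppt=\{\qtup\in\Qtup_\partt\mid\cls{\qtup}=\cls{q}\}$. You supply the details the paper leaves implicit: the identification of the groups of odd parts by multiplicity with the $(\ao,\bo,\co)$ bookkeeping of Theorem~\ref{T:count}, and the parity check that $N=\sum_{j\text{ odd}}m_j\equiv\ani\pmod 2$, which is needed to invoke the theorem. Both are correct, and your observation that the case $\co\geq 1$ with $N<\ani$ cannot arise (since $N\geq 3$, $\ani\leq 4$, and $N\equiv\ani\pmod 2$) would tidy up the brief edge-case remark, but the argument is already sound.
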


Finally, let us formulate an algorithm, suggested by the proofs above, for enumerating the elements of the set $\Ppt$. 
Set $\ani=\dim(\cls{q})$ and $n=\deg(q)$.
\begin{algorithm}  Let $\partt \in \Partt(n)$.  Write $m_j$ for the multiplicity of part $j$ in $\partt$, and let $D$ be the set of odd parts in $\partt$.  Set $m=\sum_{j\in D} m_{j}$.  
\begin{description}
\item[Step 1] If $m < \ani$ then $\Ppt = \emptyset$.  If $m=0$ and $\ani=0$ then $P_{\partt,\Hyp} = \{\Hyp\}$. 
Otherwise:
\item[Step 2]  Define a subset $E$ of $D$ as follows.  If there is at least one part $j\in D$ with $m_j\geq 3$, let $E=\{j\}$.  If  $m <4$ then let $E=D$.
Otherwise, choose $E$ to satisfy $\sum_{j\in E}m_{j} = 3$ if $\ani$ is odd and $\sum_{j\in E}m_{j} = 4$ if $\ani$ is even.

\item[Step 3]  Generate the set $S$ of all tuples $[q_{j} \mid j \in D\setminus E]$ and the (small) set $T$ of all tuples $[q_{j}\mid j\in E]$ (with $\deg(q_j)=m_j$ for each $j$).  
\item[Step 4]  By construction, for each $\qtup_S \in S$, there exist one or more tuples $\qtup_T \in T$ such that $\cls{\qtup_S}+\cls{\qtup_T}=\cls{q}$; include each of the resulting tuples $[\qtup_S ,\qtup_T]$ in $\Ppt$.  
\end{description} 
In particular, $\mathcal{O}_\lambda$ has no $\ratk$-rational points in $\so(q)$ if and only if $m<\ani$.
\end{algorithm}

\section{Representatives for nilpotent orbits}\label{S:representatives}

In this section, we show how to generate from an element of $\Nilp(\cls{q},n)$ (or $\Nilps(n)$) an explicit representative of the corresponding rational nilpotent orbit of $G=\Orth(q)$ (or $G=\SO(q)$) on $\LieG=\mathfrak{so}(q)$.  We set our notation for $\LieG$ and for irreducible $\sltwok$-modules in Section~\ref{SS:notation}.  We present the strategy for the algorithm in Section~\ref{SS:strategy}, and provide the steps in Sections~\ref{S:hyp} to \ref{S:ve}. 


\subsection{Bases for $\LieG$ and for $\sltwok$-modules}\label{SS:notation}
Suppose $q \cong \Hyp^{\oplus m} \oplus \qani$ where the anisotropic kernel $\qani$ is represented by $\la r_1, r_2, \ldots, r_\ani \ra$; then $\dim(V)=n=2m+\ani$.  
Let $\{v_1, \ldots, v_m, w_1, \ldots, w_m\}$ be a Witt basis of $\Hyp^{\oplus m}$, that is, with $B_q(v_i, w_j) = \delta_{i,j}$, such that the subspace generated by the $v_i$s (respectively, the $w_i$s) is totally isotropic.  Complete this to a basis of $V$ by choosing, for $1\leq \kay,\ell\leq \ani$, vectors $z_\ell$ in the orthogonal complement such that $B_q(z_\ell,z_{\kay})=\delta_{\ell,\kay}r_\ell$.  Then with respect to the ordered basis $B=\{v_1, \ldots, v_m, w_1, \ldots, w_m, z_1, \ldots, z_{\ani}\}$ of $V$, and the corresponding dual basis $B^*=\{v_1^*, \ldots, v_m^*, w_1^*, \ldots, w_m^*, z_1^*, \ldots, z_\ani^*\}$ of $V^*$, the Lie algebra $\LieG \subset \gl(V)$ has a maximal split toral subalgebra $\LieT$ spanned by
$$
\Hi = v_iv_i^* - w_i w_i^* 
,\quad \text{for\:}  1\leq i \leq m.
$$
Then its centralizer $\LieG^{\LieT} = \LieT \oplus \gz$, where $\gz\cong \so(\qani)$ is spanned by $\{r_{\kay}z_\ell z_{\kay}^* - r_{\ell}z_{\kay}z_{\ell}^* \mid 1\leq \ell < \kay \leq \ani\}$.
Denoting by $\ep_i \in \LieT^*$ the functional $\ep_i(\Hj) = \delta_{i,j}$, for each $1\leq i, j \leq m$, the positive roots of $\LieG$ with respect to $\LieT$ are $\Phi^+=\{  \ep_i \pm \ep_j,  \ep_k \mid 1 \leq i < j \leq m, 1 \leq k \leq m\}$ and the root system is $\Phi = \Phi^+\cup (-\Phi^+)$.  A basis for each root space is given as follows:
\begin{align}
\ep_i-\ep_j  &\ (1 \leq i \neq j \leq m):& \Ximj &= v_iv_j^*-w_jw_i^* \notag \\
\ep_i+\ep_j  &\ (1 \leq i <  j\leq m):&\Xipj &= v_iw_j^* - v_jw_i^* \notag \\ 
-\ep_i-\ep_j  &\ (1 \leq i <  j\leq m):&\X_{-i,j} &=   w_jv_i^* - w_iv_j^* \label{E:sobasis}\\ 
\ep_i  &\ (1 \leq i \leq m):&\{\Xil  &= z_\ell w_i^* - r_\ell v_i z_\ell^* 
\mid 1 \leq \ell \leq \ani\} \notag\\
-\ep_i &\ (1 \leq i \leq m):& \{\X_{-i}^\ell &= z_\ell v_i^* -r_\ell w_i z_\ell^* 
\mid 1 \leq \ell \leq \ani\}. \notag  
\end{align}
With respect to these choices, we have the expected relations 
$[\Ximj,\X_{j,i}]=\Hi-\Hj$, 
$[\Xipj,\X_{-i,j}]=\Hi + \Hj$,
$[\Xil,\X_{-i}^{\kay}] =   \delta_{\ell,\kay}r_\ell \Hi$,
and also for $i<j$
$$
[\Xil,  \X_{j}^{\kay}] = -r_\ell \delta_{\ell,\kay}\Xipj, \quad
[\X_{-i}^\ell,  \X_{-j}^{\kay}] = r_\ell \delta_{\ell,\kay}\X_{-i,j}, \quad
[\Xil,  \X_{-j}^{\ell}] = r_\ell\Ximj,
$$
whereas if $\ell\neq \kay$ and $i\neq j$ we have $[\Xil,\X_{j}^{\kay}]\in \mathfrak{s}$.

Suppose now $\LieTrip=\{Y,H,X\}\subset \so(q)$ is a Lie triple.  
Let $\slmod_i$ be an $\sltwok$-submodule of $V$.  Then a basis for $\slmod_i$ is given by $\{X^{i-1}v, X^{i-2}v, \ldots, Xv, v\}$, where $v \in \slmod_i$ is a lowest weight vector; we'll call such an ordered basis an $\sltwok$-basis.   With respect to this basis, the action of $X$ is given in matrix form as a Jordan block $J_i$, that is, an upper triangular matrix with 1s on the second diagonal and 0s elsewhere.  In fact, $H,X,Y$ act by, respectively, the matrices
\begin{equation} \label{sl2form}
h_i = \diag(i-1, i-3, \cdots, -i+3, -i+1), \quad x_i = J_i, \quad \text{and}\quad y_i = D_i\;{{}^tJ_i}
\end{equation}
where $D_i = \diag(0,\mu_1, \cdots, \mu_{i-1})$ with $\mu_k = k(i-k)$ for $1\leq k <i$.  Importantly, by our hypotheses on $p$ we have that the residual characteristic satisfies $p>h$ (where the Coxeter number $h$ of $G$ is $h=2n$ if $n$ is odd, and $h=2(n-1)$ if $n$ is even) so that $\mu_k \in \R^\times$, regardless of $i$.  We use this property at various points, whose importance will be evident in Section~\ref{S:minimal}; its necessity for the DeBacker correspondence was discussed in \cite{Nevins2011a}.

We want to describe various $\sltwok$-submodules with respect to the basis $B$ of $V$ in order to construct our desired matrix representatives.  In Sections~\ref{S:hyp} to \ref{S:ve} we generally do so in one of two ways.

In the first way, given a consecutive subset of the Witt basis, which we take without loss of generality to be $B'=\{v_1,\ldots,v_i,w_1,\ldots, w_i\}$, we realize its span $V'$ as two copies of $U_i$ by choosing the $\sltwok$-bases $B_1 = \{v_1, v_2, \ldots, v_i\}$ and $B_2=\{-w_i, w_{i-1}, \ldots, (-1)^iw_1\}$.  Then
the restriction of $\LieTrip$ to $V'$ is given in matrix form by
\begin{equation}\label{E:hyperbolic}
H|_{V'} = \diag(h_i, -h_i), \quad 
X|_{V'} = \mat{x_i & 0\\ 0& -{}^tx_i}, \quad \text{and} \quad Y|_{V'} = \mat{y_i &0\\ 0& -{}^ty_i}.
\end{equation}

Alternatively, if $i = 2k+1$ is odd, we may take a consecutive subset such as $B_k = \{v_1, \ldots, v_k, w_1, \ldots, w_k\}$ of the Witt basis, together with a vector $x$ in the span of $B\setminus B_k$  (which is not necessarily in $\spn\{z_1, \cdots, z_{\ani}\}$!) satisfying $q(x)=r$, which will take the role of the 0-weight vector of the module.  Then
$$
B' = \{rv_1, rv_2, \ldots, rv_k, x, -w_k, w_{k-1}, \ldots, (-1)^kw_1\}
$$
is an $\sltwok$-basis of a submodule $V'$ isomorphic to $U_i$, such that $q|_{V'}\cong \la r \ra \oplus \Hyp^{\oplus k} \cong \la r \ra \otimes (\la 1 \ra \oplus \Hyp^{\oplus k})$.  The restriction of $\LieTrip$ to $V'$ is given in matrix form relative to the ordered basis $B_k\cup \{x\}$ by
\begin{align}\label{E:xbasis}
H|_{V'} &= \diag(\tilde{h}_k, -\tilde{h}_k, 0), \quad \text{where} \quad  \tilde{h}_k =  \diag(2k, 2k-2, \cdots, 2),\\ 
X|_{V'} &= \mat{x_k &  0& M_r \\ 0& -{}^tx_k & 0 \\ 0 & {}^tM_1 & 0}, \quad \text{and} \quad
Y|_{V'} = \mat{\tilde{y}_k &  0& 0 \\ 0& -{}^t\tilde{y}_k & M_{-1} \\ {}^tM_{r^{-1}} & 0 & 0}, \notag
\end{align}
where $M_s$ denotes the $k\times 1$ matrix $(\delta_{i,k}s)_{1\leq i\leq k}$, and $\tilde{y}_k$ is the $k$th order principal submatrix of $y_i$.   We make the convention that if $k=0$ then $\tilde{h}_{0}$ is omitted.

\subsection{The strategy} \label{SS:strategy}

Now suppose that $(\partt, \qtup)  \in \Nilp(\cls{q},n)$ or $\Nilps(n)$, so that $\partt \in \Partt(n)$ and 
$\qtup = [q_{i_1}, \cdots, q_{i_s}] \in \Ppt$ (or is taken to be $\Hyp$ if $\partt$ has no odd parts). Let $m_i$ be the multiplicity of $i$ in $\partt$.  
Thus each distinct part in $\partt$ corresponds to an orthogonal component of the direct sum \eqref{decompgen}, on which the restriction of $q$ is equivalent to $q_{i_k}$ if the part $i_k$ is odd, or else to a direct sum of hyperbolic planes if the part is even.  When $\noar$, some complications may arise, as in the following example.

\begin{example}
Suppose $\partt = (1,3,5)$ and $q =\langle 1\rangle \oplus \Hyp^{\oplus 4}$.   If $\scno$, then  
$\Ppt = \{ [1, a, -a], [a,1,-a], [a,-a,1] \mid a\in \{\pm 1, \pm \varpi\}\}$ (writing $a$ in place of $\la a \ra$) which has $\Num{3}{q} = 10$ distinct elements.  To $[1,\varpi, -\varpi]\in \Ppt$, for example, one associates a decomposition $V=V_1\oplus V_3\oplus V_5$ with quadratic forms $q_1=\la 1 \ra$, $q_3=\la \varpi \ra \oplus \Hyp$ and $q_5=\la -\varpi \ra \oplus \Hyp^{\oplus 2}$, respectively.  Thus $V_3\oplus V_5$ is isomorphic to a direct sum of hyperbolic planes (though individually neither one is), and $(q_1,V_1) \simeq (q,V)$.  On the other hand, however, now suppose \noar.  Then to $\qtup = [ \rho,\rho,\rho]\in \Ppt$ we associate a decomposition as above but in this case, no single $V_i$ carries a form equivalent to $q$!
\end{example}

As this example illustrates, the difficulty is that although $q \simeq q_{i_1}\oplus \cdots \oplus q_{i_s}$, it is not in general true that one can partition the basis $B$ to reflect this orthogonal decomposition, and thus choices of embeddings of $(q_i,V_i)$ into $(q,V)$ must be made. 
An optimal choice, as will be discussed in Section~\ref{S:minimal}, is one for which the span of the roots on whose spaces $X$ and $Y$ are supported has the smallest dimension. We proceed as follows.

We set some notation.  Recall that we have fixed a diagonal representative $\la r_1, \ldots, r_\ani\ra$ for $\qani$.  
Let $(\partt,\qtup) \in \Nilp(\cls{q},n)$ or $\Nilps(n)$.    Let $$\Ip=\{(i,j) \mid \text{$i \in \partt$, $1\leq j\leq m_i$}\}$$ be the set of all \emph{index pairs}, which has cardinality $\vert \partt \vert = \sum_{i\in \partt}m_i$, the number of parts in $\partt$.  Write $\qtup=[q_{j_1},\ldots, q_{j_s}]$ and for each odd part $i=j_t$ ($1\leq t \leq s)$ of $\partt$ let $q_i\simeq \la r_{i,1}, r_{i,2}, \ldots, r_{i,m_i}\ra$ be a diagonal form of $q_i$. This defines a choice of map $(i,j)\mapsto r_{i,j}$ on all $(i,j)\in \Ip$ such that $i$ is odd.  

\begin{proposition}\label{P:partition}
There exists a partition $\Gamma$ of $\Ip$ into subsets of the form: 
\begin{description}
\item[\labeleven] $\{(i,j), (i,j+1)\}$ such that $i$ is even; 
\item[\labelhyp] $\{(i,j), (i,j')\}$ such that $i$ is odd, $j\neq j'$ and $\la r_{i,j}\ra \cong \la -r_{i,j'}\ra$;
\item[\labelpairs] $\{(i,j), (i',j')\}$ such that $i\neq i'$ are odd, and $\la r_{i,j}\ra\cong\la -r_{i',j'}\ra$; 
\item[\labelfours] if \noar: $\{(i_1,j_1), (i_2,j_2), (i_3,j_3), (i_4,j_4)\}$ such that the $i_k$ are distinct odd parts and $\la r_{i_1,j_1}\ra\cong\la r_{i_2,j_2}\ra\cong\la r_{i_3,j_3}\ra\cong\la r_{i_4,j_4}\ra$; 
\item[\labeltriple] if \noar, $\{(i_1,j_1), (i_2,j_2), (i_3,j_3)\}$ such that the $i_k$ are distinct odd parts, and there exists $1 \leq \ell \leq \ani$ such that $\la-r_\ell\ra\cong\la r_{i_1,j_1}\ra\cong\la r_{i_2,j_2}\ra\cong\la r_{i_3,j_3}\ra$;
\item[\labelsign] if \noar, $\{(i,j), (i',j')\}$ such that $i\neq i'$ are odd and there exist $1\leq \kay <\ell \leq \ani$ such that $\la r_{i,j}\ra\cong\la r_{i',j'}\ra\cong\la -r_\kay \ra \cong \la -r_\ell \ra$;
\item[\labelani] for some $t\leq \ani$: $\{(i_s, j_s) \mid 1\leq s \leq t\}$ such that up to permutation of the diagonal representative of $\qani$ we have $\la r_{i_s,j_s}\ra\cong\la  r_{s}\ra$ for each $1\leq s \leq t$;
\end{description}
and such that there are in total at most two sets of the form \labeltriple, \labelsign\ or \labelani\ in $\Gamma$.  
\end{proposition}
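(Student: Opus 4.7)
The plan is to build $\Gamma$ in four stages, reserving the exceptional sets of types \labeltriple, \labelsign, \labelani\ for last.  First, since each even part of $\partt$ has even multiplicity, I would pair its indices arbitrarily into sets of type \labeleven, disposing of all $(i,j)\in\Ip$ with $i$ even.

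Second, I would exploit the freedom in the choice of diagonalization of each odd $q_i$.  Writing $q_i\simeq q_i^{an}\oplus\Hyp^{\oplus k_i}$, one can diagonalize so that the first $2k_i$ entries split into $k_i$ pairs with $r_{i,2s-1}=-r_{i,2s}$; each such pair becomes a set of type \labelhyp.  The remaining indices $D\subset\Ip$ correspond to diagonalizations of the anisotropic kernels $q_i^{an}$, and by construction their Witt sum equals $\cls{\qani}$.  Now I would perform cross-$i$ pairings greedily: whenever two indices from distinct odd parts carry opposite square classes, group them as \labelpairs, and in the \noar\ case, whenever four indices from four distinct odd parts share a single square class $[a]$, group them as \labelfours\ (justified by $4\la a\ra=0$ in $\W_\ratk^+$).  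Iterate until neither move is available.

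Third, I would absorb the residual into at most two exceptional sets.  At this point the still-unpaired indices of $D$ sum to $\cls{\qani}$ in $\W_\ratk$, have no cross-$i$ opposites, and in the \noar\ case no square class is shared by four or more distinct odd parts; these constraints leave only a short list of residual configurations.  A case analysis on $\ani\in\{0,1,2,3,4\}$, using Table~\ref{Table:Witt} and Lemma~\ref{L:invariant} to enumerate the possible multisets of residual square classes, assigns the remaining indices to at most two sets of types \labelani, \labeltriple, or \labelsign, packaged so that each diagonal entry $r_1,\ldots,r_\ani$ of $\qani$ is accounted for exactly once.  For instance, when $\ani\in\{0,4\}$ the residual is either empty or a single \labelani\ with $t=\ani$; in intermediate cases one may combine a smaller \labelani\ with a \labeltriple\ or \labelsign\ that conscripts unused anisotropic entries.

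The main obstacle will be closing this final case analysis, especially in the \noar\ setting where $\la a,a\ra$ is anisotropic and the arithmetic of $\W_\ratk^+\cong(\Z/4\Z)^2$ forces subtler residual configurations.  One must check, class by class in $\ratk^\times/(\ratk^\times)^2$, that the number of unpaired indices of each class is compatible with packaging into at most two exceptional subsets of the prescribed shapes; the orbit counts from Section~\ref{S:count} serve as a useful consistency check.
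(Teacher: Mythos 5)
Your stage-by-stage plan is in the same spirit as the paper's proof, but a crucial technical move is missing, and your claimed constraints on the residual do not actually guarantee that the case analysis in your third stage can be closed.

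The paper's proof (for the case $\noar$) does not merely pair off opposites and quadruples greedily from a \emph{fixed} diagonalization; it repeatedly exploits the freedom to re-choose diagonal representatives of the $q_i$ via the isometry $\la r,r\ra\cong\la -r,-r\ra$. Concretely, whenever the residual set $S_r$ (indices carrying class $\la r\ra$) has $|S_r|>2$ and contains two elements $(i,j),(i,j')$ from the \emph{same} part $i$, the paper re-signs both to $-r$, after which at least two elements of $S_r$ can be removed by \labelhyp\ or \labelpairs. Iterating this guarantees that when one finally reaches for \labelfours\ and then one of \labeltriple/\labelsign/\labelani, the remaining indices of $S_r$ come from pairwise distinct parts --- which is exactly what those exceptional shapes require. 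Your proposal fixes the diagonalization of $q_i^{an}$ once at the start and never revisits it, and your residual constraints ("no cross-$i$ opposites; no class shared by four or more distinct odd parts") are insufficient: for instance, with $\noar$, $\partt$ containing two odd parts $i\neq i'$ of multiplicity $2$ each, and $\qtup=[\ang{1,1},\ang{1,1}]$, one has $\cls{q}=\cls{0}$ (so $\ani=0$) and, under your static diagonalization, a residual of four indices of class $\la 1\ra$ coming from only two distinct parts. This residual satisfies your constraints but cannot be packaged into any of \labelfours\ (needs four distinct parts), \labeltriple\ or \labelsign\ (need $\ani\geq 1$ or $\geq 2$), or \labelani\ (here $t\leq\ani=0$). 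The paper handles this by re-signing one of the two parts to $\ang{-1,-1}$ and then applying \labelpairs\ twice. You flag the final case analysis as "the main obstacle," and indeed, without the re-signing move it cannot be closed; with it, your framework essentially reduces to the paper's argument.
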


\begin{proof}
Even parts occur with even multiplicity, so the index pairs $(i,j)$ with $i$ even can be partitioned into couples of the form \labeleven.  Thus it suffices to consider the case that all parts $i$ of $\lambda$ are odd.

If \scno, then $\la r \ra = \la -r\ra$ so exhaustively matching up elements of $\Ip$ using \labelhyp\ or \labelpairs\ leaves at most four index pairs $(i,j)$, such that each $r_{i,j}$ represents a distinct square class in $\{1,\rho,\varpi, \rho\varpi\}$.  Since $\cls{\qtup}=\cls{q}$, we deduce these must satisfy \labelani.  Therefore the lemma holds in this case.

If \noar, then exhaustively matching up elements using \labelhyp\ or \labelpairs\ leaves at most two sets of index pairs:  $S_a$ and $S_{\varpi b}$, consisting of those $(i,j)$ for which $\la r_{i,j}\ra\cong\la a\ra$ or $\la r_{i,j}\ra \cong \la \varpi b \ra$, for some fixed $a,b \in \{1,\rho\}$, respectively.   We claim that $S_a$ and $S_{\varpi b}$ can each be partitioned as required, with at most one part of the form \labeltriple, \labelsign\ or \labelani. 

If $S_a \cup S_{\varpi b}$ satisfies \labelani, we are done.  If not, then given the classification of Table~\ref{Table:Witt}, there is at least one $r \in \{a,\varpi b\}$ such that $\vert S_r \vert >2$; we now consider each such $r$ in turn.  

If $S_r$ contains two distinct elements $(i,j)$ and $(i,j')$ with the same part $i$ of $\partt$, then by applying the relation $\la r, r \ra \cong \la -r, -r \ra$, we can replace the diagonal form of the corresponding $q_i$ with one in which $r_{i,j}=r_{i,j'} = -r$ instead.   Since $\vert S_r\vert >2$, we may then apply \labelhyp\ or \labelpairs, removing at least two (and perhaps four) elements from $S_r$.  Repeat this process until either: the resulting $S_r$ has two or fewer elements, in which case \labelani\ applies and we are done; or all the parts $i$ of $\partt$ occuring in elements of the resulting $S_r$ are distinct.  In the latter case, we next exhaustively eliminate quadruples from $S_r$ using \labelfours,  leaving at most three elements.  Recalling that $\la r,r,r\ra \simeq \la -r\ra$ and $\la r,r\ra \cong \la -r,-r\ra$, we conclude that exactly one of \labeltriple, \labelsign\ or \labelani\ must apply to what is left of $S_r$, and we are done.  
\end{proof}

Choose such a partition $\Gamma$.  For each $\lab \in \{\labeleven, \labelhyp, \labelpairs, \labelfours, \labeltriple, \labelsign, \labelani\}$, let $\Gamma_\lab$ denote the set of parts of $\Gamma$ falling under case $\lab$.  For each $\gamma \in \Gamma_\labeltriple \cup \Gamma_\labelsign \cup \Gamma_\labelani$, if any, we have that $\oplus_{(i,j)\in \gamma}\la r_{i,j}\ra$ represents a nontrivial element of the Witt group, and these are the only such parts in $\Gamma$.

In the following sections we partition the basis $B$ according to $\Gamma$,
giving an orthogonal decomposition $V = \oplus_{\gamma \in \Gamma}V_\gamma$.
 On each orthogonal subspace $V_\gamma$ we construct an action of $\sltwok$ such that 
 their direct sum is isomorphic to a decomposition \eqref{decompgen} associated to $(\lambda, \qtup)$.  In doing so, we define a Lie triple $\LieTrip = \{Y,H,X\}$ such that $X$ represents the nilpotent orbit corresponding to $(\lambda, \qtup)$.   Since $X$ is a sum of its restrictions to each $V_\gamma$, and each $V_\gamma$ is spanned by a subset of $B$, we will recoup an expression for $X$ as a linear combination of root vectors, and thus an explicit representative of the orbit.

For ease of notation, in each case we suppose the subset $B_\gamma$ of $B$ starts with $\{v_1, w_1\}$; to implement this algorithm in practice, one chooses an appropriate partition of $B$ and shifts the indices on all the vectors and root vectors.

\subsection{$\Gamma_\labeleven$ : even parts of $\partt$, and $\Gamma_\labelhyp$ : hyperbolic planes in odd parts of $\partt$} \label{S:hyp}
Suppose $\gamma=\{(i,j),(i,j')\} \in \Gamma_\labeleven \cup \Gamma_\labelhyp$.
Then $V_\gamma$ should be a sum of two copies of $\slmod_i$ and the restriction of $q$ to $V_\gamma$ should be a split quadratic space of dimension $2i$.  Therefore we choose a consecutive subset of the Witt basis with $2i$ elements, which up to relabeling we may take to be
$B_\gamma=\{v_1, \ldots, v_{i}, w_1, \ldots, w_{i}\}$. 
Set $V_\gamma = \spn(B_\gamma)$ and define an action of $\LieTrip$ as in \eqref{E:hyperbolic}.  Then $V_\gamma \cong \slmod_i\oplus \slmod_i$ and the restriction of $q$ to $V_\gamma$ is isometric to $\Hyp^{\oplus i}$. 
From the matrix form we deduce that $X|_{V_\gamma}$ is a sum of simple root vectors. Specifically, we have
\begin{equation}\label{E:hypX}
X|_{V_\gamma} = \sum_{1 \leq t < i} \X_{t,t+1}, \quad \text{and}\quad Y|_{V_\gamma}=\sum_{1 \leq t < i} \mu_t\X_{t+1,t},
\end{equation}
 where we note that $Y|_{V_\gamma}$ is a linear combination of exactly the corresponding negative root vectors.

By choosing disjoint subsets $B_\gamma$ for each $\gamma \in \Gamma_\labeleven \cup \Gamma_\labelhyp$, we thus create an $\sltwok$-submodule  $V_{\labelhyp}=\oplus V_\gamma$ of $V$.

\subsection{$\Gamma_\labelpairs$: Hyperbolic planes across two distinct odd parts} \label{S:pairs}
Suppose now that $\gamma = \{(i,j),(i',j')\} \in \Gamma_\labelpairs$. Scaling by a square if necessary, we may assume that $r_{i,j}=r$ and $r_{i',j'}=-r$ for some $r$.  Write $i=2k+1$, $i'=2k'+1$, with $k>k'\geq 0$, and set $p = \frac12(i+i')=k+k'+1$.  

Here, $V_\gamma$ should be isomorphic to $\slmod_i \oplus \slmod_{i'}$ as $\sltwok$-modules, and the restriction of $q$ to $V_\gamma$ should be $(\la r \ra \otimes (\la 1 \ra \oplus \Hyp^{\oplus k}))\oplus (\la -r \ra \otimes (\la 1 \ra  \oplus \Hyp^{\oplus k'})) \cong \Hyp^{\oplus 2p}$.  We use the relation 
\begin{equation}\label{E:hyptodiag}
{}^tK\mat{0&1\\1&0}K = \mat{r&0\\0&-r}, \quad \text{with}\; K=\mat{r/2 & r/2\\ 1&-1},
\end{equation}
to explicitly identify the two-dimensional 0-weight space, which carries the form $\langle r, -r\rangle$, with a hyperbolic plane $\Hyp$ spanned by a Witt basis.

That is, choose a subset of the Witt basis with $2p$ elements, which up to relabeling we take to be
$B_\gamma=\{v_1, \ldots, v_{p}, w_1, \ldots, w_{p}\}$.
Identify the 2-dimensional 0-weight space with the span of $\{v_{p}, w_{p}\}$ by choosing 
$x^\pm_p=\frac{r}{2}v_{p}\pm w_{p}$ as our $0$-weight vectors.  Note that $q(x^\pm_p)=\pm r$.  Then $\sltwok$-bases for the corresponding decomposition of $B_\gamma$ into orthogonal $\SL_2(k)$-invariant subspaces are
\begin{equation}\label{E:basischoice}
B_{i}= \{rv_1, rv_2, \ldots, rv_k, x_p^+, -w_k, w_{k-1}, \ldots, (-1)^{k}w_1\}
\end{equation}
and
$$
B_{i'} =\{ rv_{k+1}, rv_{k+2}, \ldots, rv_{k+k'}, x^-_p, 
w_{k+k'}, -w_{k+k'-1}, \ldots, (-1)^{k'+1}w_{k+1} \},
$$
respectively, giving the required $\sltwok$-structure to $V_\gamma = \spn{(B_\gamma)}$.

We next write down explicit representatives of the restrictions of $H$ and of $X$ to $V_\gamma$ with respect to $B_\gamma$, which amounts to performing a change of coordinates from $B_i\cup B_{i'}$, with respect to which these matrices are given as in \eqref{sl2form}.   

We have $H|_{V_\gamma} = \diag(\tilde{h}_k, \tilde{h}_{k'}, 0, -\tilde{h}_k, -\tilde{h}_{k'}, 0)$.  
If $k'=0$ then it follows from the bases above that $Xv_1=0$, $Xv_p = v_k$, and $Xv_i = v_{i-1}$ for $1<i<p$, whereas if $k'>0$ then instead $Xv_{k+1}=0$ and $Xv_{p} = v_k + v_{k+k'}$.  Similarly, if $k'=0$ we have $Xw_p=\frac{r}{2}v_k$, $Xw_k = -\frac{r}{2}v_p-w_p$ and $Xw_i = -w_{i+1}$ for all $1\leq i<k$, whereas if $k'>0$ then instead $Xw_p = \frac{r}{2}v_k-\frac{r}{2}v_{k+k'}$ and $Xw_{k+k'} = \frac{r}{2}v_p-w_p$.
Using the notation of \eqref{E:sobasis}, we may thus write the restriction of $X$ to $V_\gamma$ as the sum of positive root vectors
\begin{equation}\label{E:pairsX}
X|_{V_\gamma} = \sum_{\substack{1\leq j < k+k'\\ j\neq k} }\X_{j,j+1} + \X_{k,p}  +  \frac{r}{2}\X_{k,-p} 
+ \X_{k+k',p}
- \frac{r}{2}\X_{k+k',-p}
\end{equation}
where if $k'=0$ we omit the two  terms in which $k+k'$ appears as a subscript.  Similarly,  $Y|_{V_{\gamma}}$ is a linear combination (with coefficients in $\R^\times$) of the root vectors $$\{\X_{j+1,j}, \X_{p,k}, r^{-1}\X_{-k,p}, (\X_{p,k+k'}), (r^{-1}\X_{-p,k+k'})\mid 1\leq j <k+k', j\neq k\},$$
omitting the terms in parentheses when $k'=0$.

Making suitable choices of disjoint bases $B_\gamma$, for each $\gamma \in \Gamma_\labelpairs$, yields another split quadratic subspace $V_{\labelpairs}=\oplus_{\gamma \in \Gamma_\labelpairs}V_\gamma\subseteq V$.  

\subsection{$\Gamma_{\labelfours}$: Hyperbolic planes across four parts, when $\noar$} \label{S:fours}

Suppose now that $$\gamma=\{(i_1,j_1), (i_2,j_2), (i_3,j_3), (i_4,j_4)\} \in \Gamma_\labelfours$$ with $i_1> i_2> i_3> i_4$, and after scaling by squares if necessary, let $r$ be the common value of $r_{i_t,j_t}$ for $1\leq t\leq 4$.  Let $i_t=2k_t+1$ for each $t$, and set $p=\frac12\sum i_t =k_1+k_2+k_3+k_4+2$.  Choose a subset of the Witt basis with $2p$ elements, which we assume up to relabelling is $
B_\gamma= \{v_1, \cdots, v_p, w_1, \cdots, w_p\}.
$
This space is to carry the module $\oplus_t U_{i_t}$ with form $\langle r,r,r,r \rangle\oplus \Hyp^{\oplus p-2}$.  We choose its hyperbolic four dimensional $0$-weight space  to coincide with $W_0=\spn\{v_{p-1},v_p, w_{p-1},w_p\}$.   Using the change of basis matrix $K$ from \eqref{E:hyptodiag} we diagonalize $B_q$ on this subspace to $\diag(r,r,-r,-r)$.  Next, since $\noar$, there exist $c,s\in \ratk^\times$ such that $c^2+s^2=-1$.  A matrix $C$ satisfying ${}^tC (-rI) C = rI$ is given by
\begin{equation}\label{E:Cmatrix}
C = \mat{c&-s\\ s&c}.
\end{equation}
Consequently, the following vectors form an orthogonal basis of $W_0$ in which each vector $x_t$ satisfies $q(x_t)=r$:
\begin{align}\label{E:changebasismatrix}
x_1&=\frac{r}{2}v_{p-1}+w_{p-1}, \quad \\ \notag
x_2&=\frac{r}{2}v_{p}+w_{p}, \\ \notag
x_3&=\frac{cr}{2}v_{p-1}+\frac{sr}{2}v_p - cw_{p-1}-sw_p, \\ \notag
x_4&=\frac{-sr}{2}v_{p-1}+\frac{cr}{2}v_p+sw_{p-1}-cw_{p}. 
\end{align}
We complete each of these to an $\sltwok$-basis of $U_{i_t}$, respectively, by partitioning the remaining elements of $B_\gamma$ as before.  Specifically, setting $p_t = \sum_{s=1}^{t}k_s$, so that $p_0=0$ and $p_4=p-2$, ordered bases of the four $\sltwok$ submodules are
\begin{equation}\label{E:hypbasis2}
B_t = \{ rv_{p_{t-1}+1}, rv_{p_{t-1}+2}, \cdots, rv_{p_t}, x_{t}, -w_{p_t}, \cdots, (-1)^{k_t}w_{p_{t-1}+1} \},
\end{equation}
where it is understood that if $k_4=0$ then $B_4 = \{x_4\}$, a one-dimensional space.

These bases define the restriction of the Lie triple $\{Y,H,X\}$ to $V_\gamma=\spn(B_\gamma)$.  For example, the matrix of $H_{V_\gamma}$ is $$\diag(\tilde{h}_{k_1}, \tilde{h}_{k_2}, \tilde{h}_{k_3}, \tilde{h}_{k_4},0,0,-\tilde{h}_{k_1},-\tilde{h}_{k_2},-\tilde{h}_{k_3},-\tilde{h}_{k_4},0,0).$$
To obtain the matrix of the restriction of $X$ to $V_\gamma$ with respect to $B_\gamma$, we first invert \eqref{E:changebasismatrix}, and then apply the relations $Xx_i = rv_{p_i}$ to deduce
\begin{gather*} 
Xv_{p-1} = v_{p_1} -cv_{p_3} + sv_{p_4}\\
Xv_{p} = v_{p_2} -sv_{p_3} - cv_{p_4}\\
Xw_{p-1} = \frac{r}{2}v_{p_1} + \frac{cr}{2}v_{p_3} - \frac{sr}{2}v_{p_4}\\
Xw_{p} = \frac{r}{2}v_{p_2} + \frac{sr}{2}v_{p_3} + \frac{cr}{2}v_{p_4},
\end{gather*}
where if $k_4=0$ we omit the four terms containing the subscript $p_4$.
The action of $X$ on the remaining $v_i$ and $w_i$ of $B_\gamma$ can be read from the bases \eqref{E:hypbasis2} directly, and contribute sums of simple root vectors as before.  With respect to the root vectors \eqref{E:sobasis}, the restriction of $X$ to $V_\gamma$ is given by
\begin{align}\label{E:quadX}
X|_{V_\gamma} = \sum_{1\leq j <p-2, j\neq p_1,p_2,p_3} &\X_{j,j+1}  
+ \X_{p_1,p-1} + \X_{p_2,p}  - c\X_{p_3,p-1} -s \X_{p_3,p} \notag \\
&+ \frac{r}{2}\X_{p_1,-(p-1)} + \frac{r}{2} \X_{p_2,-p} + 
\frac{cr}{2}\X_{p_3,-(p-1)} +\frac{sr}{2} \X_{p_3,-p} \notag\\
&+ s\X_{p_4,p-1} -c\X_{p_4,p} +\frac{-sr}{2}\X_{p_4,-(p-1)} + \frac{cr}{2}\X_{p_4,-p},
\end{align}
where the four terms containing $p_4$ as a subscript are omitted if $k_4=0$.  As before, one can verify that $Y|_{V_\gamma}$ is a linear combination of the corresponding negative root vectors, with the proviso that if a root vector appears with coefficient in $a\R^\times$ in \eqref{E:quadX} for some $a\in \ratk$ then the corresponding negative root vector appears with a coefficient in $a^{-1}\R^\times$ in $Y|_{V_\gamma}$, that is, with the negative valuation.

Choosing disjoint subsets $B_\gamma$, for each $\gamma \in \Gamma_\labelfours$, gives an $\sltwok$-invariant split quadratic subspace $V_{\labelfours}=\oplus_{\gamma\in \Gamma_\labelfours} V_\gamma$ of $V$.

\subsection{$\Gamma_\labeltriple$: Anisotropic part, when $\noar$ and a triple identity is required} \label{S:noarani2}
Suppose  $\noar$ and $\gamma =\{(i_1,j_1), (i_2,j_2), (i_3,j_3)\}\in \Gamma_\labeltriple$; without loss of generality we assume $i_1>i_2>i_3$ and $r_{i_1,j_1}=r_{i_2,j_2}=r_{i_3,j_3}=-r$. Let $\ell\in \{1,\ldots, \ani\}$ be such that $r=r_\ell$; then by the proof of Proposition~\ref{P:partition} we know that this is the only occurrence of $r$ (up to scaling by $\ratks$) in the diagonal form of $\qani$.  Let $k_t=(i_t-1)/2$ for each $t$ and set $p=k_1+k_2+k_3+1$.   In this case, $V_\gamma$ should be isomorphic to $\slmod_{i_1}\oplus \slmod_{i_2} \oplus \slmod_{i_3}$ and carry the form $\la -r,-r,-r \ra \oplus \Hyp^{\oplus p-1} \cong \la r \ra \oplus \Hyp^{\oplus p}$.
Therefore, up to relabeling of the Witt basis, we choose the subset
$$
B_\gamma = \{v_1, \cdots, v_{p}, w_1, \cdots, w_{p}, z_\ell\}.
$$
The restriction of $q$ to $W_0=\spn\{v_p,w_p,z_\ell\}$ can be transformed to the diagonal form $\la -r,-r,-r\ra$ by first applying the matrix $K$ of \eqref{E:hyptodiag} to the first two coordinates, then $C$ of \eqref{E:Cmatrix} to the first and last coordinates.  Thus the orthogonal vectors
$$
x_1 = \frac{cr}{2}v_p + cw_p+sz_\ell, \quad x_2 = \frac{r}{2}v_p - w_p, \quad \text{and}
\quad x_3 = -\frac{sr}{2}v_p-sw_p+cz_\ell
$$
each satisfy $q(x_i)=-r$, which implies that the following bases span complementary quadratic subspaces of $V_\gamma=\spn(B_\gamma)$, each with anisotropic kernel $\la -r \ra$:
\begin{gather*}
B_1 = \{ -rv_1, \ldots, -rv_{k_1}, x_1, -w_{k_1}, \ldots, (-1)^{k_1}w_1\}\\
B_2 = \{ -rv_{k_1+1}, \ldots, -rv_{k_1+k_2}, x_2, -w_{k_1+k_2}, \ldots, (-1)^{k_2}w_{k_1+1}\}\\
B_3 = \{ -rv_{k_1+k_2+1}, \ldots, -rv_{k_1+k_2+k_3}, x_3, -w_{k_1+k_2+k_3}, \ldots, (-1)^{k_3}w_{k_1+k_2+1}\}
\end{gather*}
where it is understood that $B_3 = \{x_3\}$ if $k_3=0$.
We interpret the $B_i$ as standard bases for $\sltwok$-modules as usual.  With respect to $B_\gamma$ the matrix of the restriction of $H$ to $V_\gamma$ is $\diag(\tilde{h}_{k_1},\tilde{h}_{k_2},\tilde{h}_{k_3},0, -\tilde{h}_{k_1},-\tilde{h}_{k_2},-\tilde{h}_{k_3},0,0)$.  The action of $X$ can be determined from the bases $B_t$ above, noting that
$$
v_p = -cr^{-1}x_1+r^{-1}x_2+sr^{-1}x_3, \quad w_p=\frac12(-cx_1-x_2+sx_3), \quad z_\ell=-sx_1-cx_3.
$$
It follows that in terms of the root vectors \eqref{E:sobasis} we have
\begin{align} \label{E:tripX}
X|_{V_\gamma} = \sum_{\substack{1\leq j < p-1\\ j\neq k_1, k_1+k_2} } &\X_{j,j+1}
+c\X_{k_1,p} - \X_{k_1+k_2,p} -s\X_{p-1,p}+\notag\\
&+\frac{cr}{2}\X_{k_1,-p} + \frac{r}{2}\X_{k_1+k_2,-p} - \frac{sr}{2}\X_{p-1,-p}
-s\X^\ell_{k_1} - c\X^\ell_{p-1}
\end{align}
where if $k_3=0$ we omit the three terms having $p-1$ as a subscript.  Similarly, we can readily determine that $Y|_{V_\gamma}$ is a linear combination (with coefficients in $\R^\times$) of the root vectors
\begin{gather*}
\{\X_{j+1,j}, \X_{p,k_1}, \X_{p,k_1+k_2}, (\X_{p,p-1}), r^{-1}X_{-k_1,p}, r^{-1}\X_{-(k_1+k_2),p}, (r^{-1}\X_{-(p-1),p}),\\ r^{-1}\X^\ell_{-k_1}, (r^{-1}\X^\ell_{p-1}) \mid 1\leq j <p-1, j\neq k_1, j\neq k_1+k_2\}
\end{gather*}
where the vectors in parentheses are omitted if $k_3=0$.

\subsection{$\Gamma_{\labelsign}$: Anisotropic part, when $\noar$ and sign change is required} \label{S:noarani1}

Now suppose  $\noar$ and $\gamma=\{(i,j), (i',j')\}\in \Gamma_{\labelsign}$.  We assume without loss of generality that $i>i'$ and $r_{i,j}=r_{i',j'}=-r$.   By the proof of Proposition~\ref{P:partition}, there are exactly two indices $\ell <\kay$ in $\{1,\ldots, \ani\}$ such that $r=r_\ell=r_\kay$.  Let $k=(i-1)/2$, $k'=(i'-1)/2$ and $p=k+k'$.  Then $V_\gamma$ should be isomorphic to $\slmod_i \oplus \slmod_{i'}$ as $\sltwok$-modules and carry the form $\la -r,-r \ra \oplus \Hyp^{\oplus p}$.  
Therefore, up to numbering of the Witt basis, we choose the subset
$$
B_\gamma = \{v_1, \cdots, v_{p}, w_1, \cdots, w_{p}, z_\ell, z_{\kay}\}.
$$
The matrix $C$  of \eqref{E:Cmatrix} transforms $\la r,r\ra$ to $\la -r,-r\ra$, so the vectors
$$
x_1 = cz_\ell-sz_{\kay} \quad \text{and}\quad x_2 = sz_\ell + cz_{\kay}
$$
each satisfy $q(x_i)=-r$.  Thus the following bases span complementary quadratic subspaces of
the span $V_\gamma$ of $B_\gamma$, each with anisotropic kernel $\la -r \ra$:
\begin{gather*}
B_1= \{ -rv_1, \ldots, -rv_{k}, x_1, -w_{k}, \ldots, (-1)^{k}w_1\}\\
B_2 = \{ -rv_{k+1}, \ldots, -rv_{k+k'}, x_2, -w_{k+k'}, \ldots, (-1)^{k'}w_{k+1}\}
\end{gather*}
where it is understood that $B_{2} = \{x_2\}$ if $k'=0$.
We interpret the $B_i$ as standard bases for $\sltwok$-modules as usual.  With respect to $B_\gamma$ the matrix of the restriction of $H$ to $V_\gamma$ is 
$\diag(\tilde{h}_{k},\tilde{h}_{k'}, -\tilde{h}_{k}, -\tilde{h}_{k'}, 0,0)$.  The action of $X$ can be read from the bases $B_t$ above, noting that
$$
z_\ell = -cx_1-sx_2\quad \text{and}\quad  z_\kay = sx_1-cx_2. 
$$
In terms of the root vectors \eqref{E:sobasis} we have
\begin{equation}\label{E:signX}
X|_{V_\gamma} = \sum_{1\leq j< k+k', j\neq k} \X_{j,j+1} -c\X_{k}^\ell +s\X_{k}^{\kay} - s\X_{k+k'}^{\ell} -c\X_{k+k'}^{\kay},
\end{equation}
where we omit the two terms with $k+k'$ in the subscript if $k'=0$.  On the other hand, $Y|_{V_\gamma}$ is a linear combination  (with coefficients in $\R^\times$) of the root vectors in the set
$$
\{ \X_{j+1,j}, r^{-1}\X^\ell_{-k}, r^{-1}\X^\kay_{-k}, (r^{-1}\X^\ell_{-(k+k')}), 
(r^{-1}\X^\kay_{-(k+k')}) \mid 1\leq j<k+k', j\neq k\}
$$
where the vectors in parentheses are omitted if $k'=0$.

\subsection{$\Gamma_{\labelani}$: Anisotropic part, simple case} \label{S:scnoani}
Suppose now that $\gamma \in \Gamma_\labelani$ and match each element $(i,j)$ of $\gamma$ to a distinct index $\ell \in \{1, \ldots, \ani\}$ such that $r_{i,j}=r_{\ell}$.  
Now fix $(i,j)\in \gamma$ and the corresponding index $\ell$.  Let $k=(i-1)/2$; then up to renumbering the elements of the Witt basis, choose the subset
$
B_{(i,j)} = \{ v_1, \ldots, v_k, w_1, \ldots, w_k, z_\ell \}
$
and denote its span $V_{(i,j)}$.  We rescale and reorder this basis to obtain the $\sltwok$-basis
$$
B_\ell = \{r_\ell v_1, \cdots, r_\ell v_k, z_\ell, -w_k, w_{k-1}, \cdots, (-1)^kw_1\}
$$
that spans an irreducible $\sltwok$-module isomorphic to $\slmod_i$ and carrying the form $\la r_\ell \ra \otimes (\la 1 \ra \oplus \Hyp^{\oplus k})$.
The restriction of $H$ to $V_{(i,j)}$ is given in matrix form by \eqref{E:xbasis}, with $x=z_\ell$.  In terms of our chosen scaling of root vectors in \eqref{E:sobasis} we have, if $k>0$,
\begin{equation}\label{E:aniX}
X|_{V_{(i,j)}} = \sum_{j=1}^{k-1}\X_{j,j+1} - \X_k^\ell, \quad Y|_{V_{(i,j)}} = \sum_{j=1}^{k-1}\mu_j\X_{j+1,j} + \mu_k r^{-1} \X_{-k}^\ell,
\end{equation}
and both are $0$ if $k=0$.  
Let $V_{\gamma}=\oplus_{(i,j)\in \gamma} V_{(i,j)}$, obtained by choosing suitable disjoint subsets $B_{(i,j)}$ of the basis $B$.

By Proposition~\ref{P:partition}, $\Gamma$ contains at most two parts corresponding to the cases \labeltriple, \labelsign, or \labelani, and their union corresponds to a subspace of $V$ carrying a form that is Witt-equivalent to $q$.  Let $V_\labelani = \oplus_{\gamma \in \Gamma_\labeltriple \cup \Gamma_\labelsign \cup \Gamma_\labelani}V_\gamma$ denote this subspace.

Putting all of the preceding constructions together, we have chosen a partition of the basis $B$ and a corresponding direct sum decomposition
\begin{equation}\label{E:sumall}
V = V_\labelhyp \oplus V_\labelpairs \oplus V_\labelfours \oplus V_\labelani
\end{equation}
together with an action of  a Lie triple $\{Y,H,X\}$, such that $X$ represents the nilpotent orbit of $\Orth(q)$ attached to $(\partt, \qtup)$.

\subsection{Very even orbits}\label{S:ve}
Now suppose that $\partt \in \Partte(n)$ is a very even partition.  A representative for the 
corresponding nilpotent $\Orth(q)$ orbit on $\so(q)$ was constructed in Section~\ref{S:hyp}, by pairing up irreducible $\sltwok$-submodules in the obvious way.
In this section, we modify one component in order to construct a second representative, such that the nilpotent $\SO(q)$ orbits of the two representatives are distinct.

Choose one element of $\gamma =\{(i,1),(i,2)\}\in \Gamma_\labeleven = \Gamma$, and up to relabelling let
$
B_\gamma = \{v_1, \ldots, v_i, w_1, \ldots, w_i\}
$
be the corresponding subset of the Witt basis.  Set $V_\gamma=\spn(B_\gamma)$.  This time, using the strategy of the proof of Theorem~\ref{T:mainthm} we first apply the orthogonal transformation of determinant $-1$ which permutes $v_i$ and $w_i$, to define
\begin{gather*}
B_1 = \{v_1, v_2, \cdots, v_{i-2}, v_{i-1}, w_i\}\\
B_2 = \{ v_i, -w_{i-1}, w_{i-2}, \cdots, (-1)^{i-1}w_1\}
\end{gather*}
as the $\sltwok$-bases for the two submodules isomorphic to $\slmod_i$.  With respect to $B_\gamma$, the restriction of $H$ to $V_\gamma$ now has the form $\diag(i-1, h_{i-2}, i-1, -i+1, -h_{i-2}, -i+1)$.  The action of $X$ and $Y$ on $V_\gamma$ can be read directly from $B_1$ and $B_2$; as a sum of root vectors, this yields
\begin{equation}\label{E:veryevenX}
X|_{V_\gamma} = \sum_{j=1}^{i-2}\X_{j,j+1} + \X_{i-1,-i}, \quad Y|_{V_\gamma} = \sum_{j=1}^{i-2}\mu_j\X_{j+1,j} + \mu_1\X_{-(i-1),i}.
\end{equation}
Putting this component in the place of $V_\gamma$ in \eqref{E:sumall}, we obtain a representative of the second $\SO(q)$-orbit corresponding to $\partt$.

\section{On functoriality and the DeBacker parametrization of nilpotent orbits}\label{S:functoriality}

Let us return to the more general setting of Section~\ref{SS:centralizers}, and adopt the notation introduced there.  


In \cite{DeBacker2002}, DeBacker gives a parametrization of rational nilpotent orbits of $G$ on $\LieG$ in terms of objects arising from its Bruhat-Tits building $\buil(G) = \buil(\algG,\ratk)$.  
To describe it, recall that to each $x\in \buil(G)$ Bruhat-Tits theory associates an $\R$-lattice $\LieG_{x,0}$; this is carefully described (particularly for non-split groups) in \cite{Fintzen2015}, for example, following \cite{BruhatTits1984}.  In this section, we often use a more concrete description of the Bruhat-Tits building of our groups in terms of lattice chains, as in \cite{BroussousLemaire2002} and \cite{BroussousStevens2009}.

Given a nilpotent element $X$, form a  Lie triple $\LieTrip =\{Y,H,X\}$ and define the subset
$$
\buil(Y,H,X) =\buil(\LieTrip) = \{x \in \buil(G) \mid \LieTrip \subset \LieG_{x,0}\}.
$$
This is a union of facets of $\buil(G)$; let $\facet$ be such a facet.  Then the pair $(\facet,X)$ (or rather, $(\facet,v)$ where $v \in \LieG_{x,0}/\LieG_{x,0+}$ is the image of $X$ in the quotient, though we will not need this here) is called \emph{degenerate}.
We say a degenerate pair $(\facet,X)$ is \emph{distinguished} if $\facet$ is of maximal dimension in $\buil(\LieTrip)$.  DeBacker shows that the rational nilpotent orbits are in bijection with classes of distinguished pairs relative to an equivalence relation called $0$-associativity \cite{DeBacker2002}.  
For ease of notation, write
\[
  \cB(G)^\phiR := \cB(G)^{\phi(\SL_2(\foo))}. 
\]
Then by \cite[Corollary 4.5.5]{DeBacker2002}, we have
$
\cB(G)^\phiR = \cB(\LieTrip).
$

The following theorem characterizes $\cB(G)^\phiR$ in terms of the building of the centralizer $G^\phi$ of $\phi(\SL_2(\ratk))$ for many groups.  Let $D$ denote a central division algebra over $\ratk$.

\begin{theorem}\label{T:building}
  Suppose $G$ is $\GL_n(D)$, $\SL_n(D)$ or a classical group, and suppose
  $\LieTrip=\set{Y,H,X}$ is a Lie triple in $\LieG$.  
  Then there is a natural  $G^\phi$-equivariant identification 
  \[
    \cB(G)^\phiR = \cB(G^\phi). 
  \]
\end{theorem}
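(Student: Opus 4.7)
My plan is to realize both sides as spaces of (possibly self-dual) $\R_D$-norms on a common $D$-module and then write down an explicit bijection. For $G$ equal to $\GL_n(D)$, $\SL_n(D)$, or a classical group acting on a $D$-module $V$, the Bruhat-Tits building $\cB(G)$ is by the standard theory identifiable with the space of $\R_D$-norms on $V$ (possibly quotiented by the constants for $\SL_n(D)$, or restricted to self-dual norms in the classical case). Using the $\sltwok$-triple I would invoke the decomposition
\[
V = \bigoplus_i \slmod_i \otimes_\ratk \mults^i
\]
from the proof of Theorem~\ref{T:mainthm}, which identifies $G^\phi$ with a product of isometry groups on the multiplicity spaces, hence $\cB(G^\phi) = \prod_i \cB(\rU(\mults^i, \wtf^i))$.

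The construction proceeds through an explicit map $\sE \colon \cB(G^\phi) \to \cB(G)^\phiR$. For each $i$ I would fix the canonical $\sltwok$-invariant $\R_D$-lattice $L_i^0 \subset \slmod_i$ spanned by the $\sltwok$-basis of Section~\ref{SS:notation} starting from a lowest weight vector; the assumption that the nilpotency degree of $X$ is less than $p$ enters exactly here, as it guarantees the $Y$-coefficients $\mu_k = k(i-k)$ are units in $\R^\times$, so $L_i^0$ is genuinely $\phi(\SL_2(\R))$-stable. Given $(\beta_i)_i$ representing a point of $\prod_i \cB(\rU(\mults^i, \wtf^i))$, I would define $\sE((\beta_i)_i)$ to be the orthogonal sum of tensor norms $\alpha_i^0 \otimes \beta_i$ on $\slmod_i \otimes \mults^i$, where $\alpha_i^0$ is the norm attached to $L_i^0$. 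This sum is manifestly $\phi(\SL_2(\R))$-invariant, $G^\phi$-equivariant, and self-dual in the classical case, and injectivity is immediate by restricting to any single weight space.

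The main obstacle is surjectivity, which I would establish in two steps. First, I would show that any $\phi(\SL_2(\R))$-invariant norm respects the $H$-weight decomposition and, through the $X$- and $Y$-actions, respects the isotypic decomposition of $V$ as an $\sltwok$-module. Second, on a single isotypic component $\slmod_i \otimes \mults^i$, I would prove that the only $\phi(\SL_2(\R))$-invariant norms have the form $\alpha_i^0 \otimes \beta$ for some norm $\beta$ on $\mults^i$; this is a rigidity statement for $\sltwok$-invariant lattices, whose proof relies again on the good-characteristic hypothesis to ensure that the $X$- and $Y$-action matrices have unit entries in the chosen basis, so that any invariant lattice is determined by its intersection with the lowest weight space. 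This rigidity step is the technical heart of the argument. Putting these together identifies $\sE$ with a bijection onto $\cB(G)^\phiR$, and the compatibility with self-duality and with the $G^\phi$-action follows by construction.
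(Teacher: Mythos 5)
Your proposal follows essentially the same route as the paper: identify $\cB(G)$ with (self-dual, in the classical case) lattice functions on $V$, decompose $V = \bigoplus_{i<p} \slmod_i \otimes_\ratk \mults^i$ via $\LieTrip$, and build the map $\sE$ from $\cB(G^\phi)$ to $\cB(G)^\phiR$ by tensoring each lattice in $\mults^i$ with the fixed $\SL_2(\R)$-stable lattice $L_i^0 \subset \slmod_i$. The surjectivity of $\sE$ that you reduce to a rigidity statement about $\SL_2(\R)$-invariant lattices in an isotypic component is exactly the content of the paper's Lemma~\ref{lem:key}. Where you differ is in the proof of that lemma: the paper reduces mod $\PP$, uses the hypothesis on the nilpotency degree to see that $\cL\otimes_\R\resk$ is a semisimple $\SL_2(\resk)$-module, and then invokes a lifting result of McNinch to get the tensor decomposition over $\R$; you instead propose a direct weight-space argument (distinct weights mod $p$ decompose any invariant lattice by weight, and unit $YX$-coefficients propagate the lowest-weight lattice across all weight spaces), which the paper explicitly acknowledges as a viable alternative, citing \cite[Section~7.1]{Yap2018}. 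Your sketch of that argument is correct, although note that the step where the invariant lattice respects the \emph{isotypic} (not just weight) decomposition itself requires the unit-coefficient observation, since different $\slmod_i$ share weight spaces. The one place you are too brisk is the self-duality claim at the end: the paper carries out a genuine computation with the sesquilinear form, using the normalization $\slf_i(\cU_i,\cU_i)=\R$ and the pairwise orthogonality of the isotypic components, to verify that the inverse direction lands in self-dual lattice functions on $\mults^i$; your ``follows by construction'' conceals that verification, which is not formal but is precisely where the normalization of $\slf_i$ earns its keep.
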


\begin{proof}
First let $G=\GL(V,D)$ for some central division algebra  $D$ over $\ratk$ and 
free rank $n$ right $D$-module $V$.  Let $\foo_D\supset \foo$ be the ring of integers
of $D\supset \ratk$.     
We identify $\cB(G)$ with the set of lattice functions
$\LattD(V) = \{\cV_x \mid t \mapsto \mathcal{V}_{x,t} \}$ on $V$ \cite{BroussousLemaire2002}.  Since each
$\foo_D$ lattice in $V$ is naturally also an $\foo$-lattice, $\buil(G)$ is canonically identified as a
subset of $\cB(\GL(V,\ratk))$.  Given $\LieTrip$, recall that we have $G^\phi = \prod_i \GL(\mults^i,D)$ so
$\cB(G^\phi) = \prod_i \cB(\GL(\mults^i,D))$.


Now each $x \in \buil(G)^{\phiR}$ corresponds to a lattice function $\cV_x$. 
Define a map
\begin{equation}\label{eq:R}
  \begin{tikzcd}[column sep=0em, row sep=0em]
    \sR\colon & \buil(G)^{\phiR} \ar[rr]& \text{\hspace{2em}}& \cB(G^\phi)& \\
      &\cV_x \ar[mapsto,rr]&& (\iicMx)_i & \text{where }  \iicMxt := \Hom_{\sltwoR}(\cU_i, \cV_{x,t}). 
  \end{tikzcd}
\end{equation}
Note that $\iicMxt$ has $\foo_D$-module structure inherited from that of
$\cV_{x,t}$.  

On the other hand, we define a map 
\begin{equation}\label{eq:E}
  \begin{tikzcd}[column sep=0em, row sep=0em]
    \sE\colon & \cB(G^\phi) \ar[rr]& \text{\hspace{2em}}& \cB(G) &\\
      &(\iicMx)_i\ar[mapsto,rr]&& \cV_{x} & \text{where }  \cV_{x,t} :=  \bigoplus_i \cU_i \otimes_\foo \iicMxt.
    \end{tikzcd}
\end{equation}
Its image lies in $\buil(G)^\phiR$ by construction.


It is immediate that  for all $y\in \buil(G^\phi)$ we have $\sR(\sE(y)) = y$.
On the other hand, Lemma~\ref{lem:key} ensures that for all $x\in \buil(G)^\phiR$ we have $ \sE(\sR(x)) = x$.  The theorem now follows for $G=\GL_n(D)$, and for $\SL_n(D)$ by restricting the maps $\sR$ and $\sE$.

Now let $G$ be a classical group; that is, for some central division algebra $D$ and sesquilinear form $F\colon V\otimes V\rightarrow D$ we have that  $G=\rU(V,F)$.
As in Section~\ref{SS:centralizers}, given $\LieTrip$ we have
$G^\phi = \prod_i  \rU(\mults^i, \wtf^i)$.
We may identify $\cB(G)$ and $\cB(G^\phi)$) with self-dual lattice functions in
$\cB(\GL(V,D))$ and $\prod_i \cB(\GL(\mults^i,\wtf^i))$ respectively
\cite{BroussousStevens2009}. 
It is immediate that \eqref{eq:E} restricts to a well-defined map $\sE$ from $\buil(G^\phi)$ to $\buil(G)^\phiR$.

To prove that the image of the map $\sR$ of \eqref{eq:R} lies in $\buil(G^\phi)$, suppose that $x\in \cB(G)^\phi \subset \cB(\GL(V,D))^\phi$ and $\sR(x) = (\iicMx)_i \in \buil(\GL(\mults^i,\wtf^i))$.   We need to show that for each $i$ and $x$, 
  $\iicMx$ is a self-dual lattice in $\mults^i$ under the form $\wtf^i$. 
   
  Since $\cV_x$ is self-dual and $\slf_i(\cU_i,\cU_i) = \R$ by assumption, we have that
  \[
    \wtf^i(\iicMxt,\iicMxmtp) = (\slf_i\otimes \wtf^i)(\cU_i \otimes
      \iicMxt,\cU_i\otimes \iicMxmtp) \subset F(\cV_{x,t},\cV_{x,-t^+}) =
    \PP_D.
  \]
  Therefore $\iicMxt \subset (\iicMxmtp)^*$.  On the other
  hand,  the
  pairing $F$ between different the isotypic components
  $\slmod_i\otimes \mults^i$ being zero, we have
  \[
    F(\cU_i\otimes (\iicMxmtp)^*,\cV_{x,-t^+}) = F(\cU_i\otimes
      (\iicMxmtp)^*,\cU_i\otimes \iicMxmtp) = \PP_D.
  \]
 We thus deduce that $\cU_i\otimes (\iicMxmtp)^* \subset (\cV_{x,-t^+})^* =
  \cV_{x,t}$, that is, $(\iicMxmtp)^*\subset \iicMxt$, as required.
The desired identification of $\buil(G)^\phiR$ and $\buil(G^\phi)$ now follows from \eqref{eq:R} and \eqref{eq:E}.
\end{proof}

\section{Realization of the DeBacker parametrization for orthogonal groups} \label{S:minimal}
We now return to the setting of orthogonal groups. 
In this section, we attach facets of the building of $G$ to selected explicit Lie triples of Section~\ref{S:representatives} and use the results of Section~\ref{S:functoriality} to prove these are distinguished representatives which realize the DeBacker correspondence.  We return to the more abstract realization of buildings used in \cite{DeBacker2002}.

Let $T$ be the maximal split torus of $G=\SO(q)$ with Lie algebra $\LieT$.  We have the root system $\Phi=\Phi(G,T)$ with simple system $\Delta = \{\ep_1-\ep_2, \ldots, \ep_{m-1}-\ep_m, \ep_m\}$ if $\ani>0$ and $\Delta = \{\ep_1-\ep_2, \ldots, \ep_{m-1}-\ep_m, \ep_{m-1}+\ep_m\}$ if $\ani=0$.  Let $\apart = \apart(T)$ be the corresponding apartment in $\buil(G)=\buil(\SO(q))$; this is the affine space under $X_*(T)\otimes_{\mathbb{Z}}\mathbb{R}$ on which the roots act by functionals, together with the simplicial structure defined by the affine root hyperplanes $\Hyp_{\alpha,n}=\{x\in \apart \mid \alpha(x)=n\}$, as $\alpha$ ranges over $\Phi$ and $n$ over $\mathbb{Z}$.

We choose a pinning of $G$ relative to $T$, which is a consistent choice of valuation on each root subgroup (or equivalently, root subalgebra), and identified with a choice of (hyperspecial) vertex $x_0$ of $\apart \subset \buil(G)$.  For each $\alpha\in \{ \pm \ep_i \pm \ep_j \mid 1\leq i \neq j \leq m\}$, we have $\dim(\LieG_\alpha)=1$ and we declare that our chosen root vectors $\X_{\pm i, \mp j}$ have valuation $0$.  If $\ani>0$ then there are roots $\alpha\in \{\pm \ep_i \mid 1\leq i \leq m\}$ and for each one, $\dim(\LieG_{\ep_i})=\ani$. 
Following the process described in \cite[\S2]{Fintzen2015}, one determines that a consistent pinning assigns valuation $0$ to each root vector $\X_{\pm i}^\ell$ such that $\val(r_\ell) = 0$ and valuation $\frac12$ to each root vector $\X_{\pm i}^\ell$ such that $\val(r_\ell)=1$.  Then the corresponding $\R$-subalgebra $\LieG_{x_0,0}$ of $\LieG$ (which is the stabilizer in $\LieG$ of the lattice chain attached to $x_0$, in the language of Section~\ref{S:functoriality}) is generated by the $\R$-span of our chosen root vectors.  Its intersection with $\LieT+\mathfrak{s}$ is an $\R$-subalgebra containing, in particular, the $\R$-span of $\{\Hi \mid 1\leq i \leq m\}$.

We put coordinates on $\apart$ so that the vertex $x_0$ is the origin.  
Thus when a vector $X\in \LieG$ is expressed as a linear combination of nonzero vectors in different root spaces  $\sum_{\alpha \in \Phi_X} X_\alpha$, then the $x\in \apart$ for which $X \in \LieG_{x,0}$ are simply described by the condition that for each $\alpha\in \Phi_X$ we have $\val(X_\alpha)+\alpha(x)\geq 0$.

\begin{proposition}\label{P:dimension}
Let $(\partt,\qtup)\in \Nilp(\cls{q},n)$  and let $\Gamma$ be a partition of $\Ip$ as in Proposition~\ref{P:partition}.  Let $\LieTrip_\Gamma$ be an associated Lie triple as constructed in Section~\ref{S:representatives}.  Let $\facet$ denote a maximal facet in
$\buil(\LieTrip_\Gamma)\cap \apart$. 
If $\partt$ is very even, then $\dim(\facet) = \frac12 \vert \partt \vert$ and this is the same value obtained for both $SO(q)$-orbits attached to $\partt$.  Otherwise, $\dim(\facet) = \vert \Gamma_\labeleven\vert + \vert \Gamma_\labelhyp \vert$.
\end{proposition}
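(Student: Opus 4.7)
The plan is to translate the condition $\LieTrip_\Gamma\subset \LieG_{x,0}$ into a system of affine inequalities on $x\in \apart$, using the explicit descriptions of $X|_{V_\gamma}$ and $Y|_{V_\gamma}$ as sums of root vectors in \eqref{E:hypX}, \eqref{E:pairsX}, \eqref{E:quadX}, \eqref{E:tripX}, \eqref{E:signX}, \eqref{E:aniX}, and then read off the dimension of the maximal facet as the dimension of the affine subspace of $\apart$ cut out by the resulting system of equalities. First I would dispense with $H$: since $H$ is an integer combination of the $\Hi$'s, it lies in $\LieT(\R)\subset\LieG_{x,0}$ for all $x\in \apart$, so only $X,Y$ impose constraints. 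By the boxed criterion at the end of Section~\ref{S:minimal}, writing $X=\sum_\alpha X_\alpha$ and $Y=\sum_\alpha Y_\alpha$, we need $\val(X_\alpha)+\alpha(x)\geq 0$ and $\val(Y_\alpha)+\alpha(x)\geq 0$ for every root component.

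Next I would treat each type $\gamma \in \Gamma$ in turn. For $\gamma\in \Gamma_\labeleven\cup \Gamma_\labelhyp$, only the chain vectors $\X_{j_t,j_{t+1}}$ (in $X$) and their transposes $\X_{j_{t+1},j_t}$ (in $Y$) appear, all of pinning valuation $0$. The two chains together impose $\ep_{j_1}(x)=\cdots=\ep_{j_i}(x)=c_\gamma$, leaving exactly one free parameter $c_\gamma\in \real$. For $\gamma\in \Gamma_\labelpairs\cup\Gamma_\labelfours$, besides chain vectors one has extra terms of the form $\tfrac{r}{2}\X_{j,-j'}$ in $X$ and $r^{-1}\X_{-j,j'}$ in $Y$; these impose $\ep_j(x)+\ep_{j'}(x)\geq -\val(r)$ and the opposite inequality, so $\ep_j(x)+\ep_{j'}(x)=-\val(r)$, which combined with the chain equalities pins down $c_\gamma=-\val(r)/2$. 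Similar reasoning applies to $\Gamma_\labeltriple$ and $\Gamma_\labelsign$; in those cases the coefficients $c,s$ with $c^2+s^2=-1$ are units in $\R^\times$ (reducing to a nontrivial sum-of-squares identity in $\resk$), so they do not affect the valuations, and the extra root vectors $\X_k^\ell, \X^\kay_k$ of pinning valuation $\val(r_\ell)/2$ together with the $Y$-components of pinning valuation $-\val(r_\ell)/2$ force $\ep_{j}(x)=-\val(r_\ell)/2$ on every relevant coordinate. For $\gamma \in \Gamma_\labelani$, the vectors $\X_k^\ell$ in $X$ and $\mu_k r_\ell^{-1}\X_{-k}^\ell$ in $Y$ enforce $\ep_{j_k}(x)=-\val(r_\ell)/2$, again leaving no free parameter.

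Summing the contributions and noting that the subsets $B_\gamma$ partition the Witt basis of $V$, so the coordinates $\ep_1(x),\ldots,\ep_m(x)$ are partitioned among the $\gamma$'s, I conclude that the affine subspace of $\apart$ defined by the $\LieTrip_\Gamma$-inequalities has dimension $|\Gamma_\labeleven|+|\Gamma_\labelhyp|$, and hence so does any maximal facet contained in it. For $\partt$ very even, $\Gamma=\Gamma_\labeleven$ and each pair contributes one free parameter, giving $|\Gamma_\labeleven|=|\partt|/2$. For the second $\SO(q)$-orbit constructed in Section~\ref{S:ve}, the modified component yields the chain equalities $\ep_{j_1}(x)=\cdots=\ep_{j_{i-1}}(x)=c$ together with $\ep_{j_{i-1}}(x)+\ep_{j_i}(x)=0$ coming from $\X_{i-1,-i}$ in $X$ and $\mu_1\X_{-(i-1),i}$ in $Y$, so $\ep_{j_i}(x)=-c$; this still leaves exactly one free parameter, matching the count for the standard construction.

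The main obstacle is the bookkeeping for the cases \labelfours, \labeltriple, \labelsign, where one must verify that the non-simple root vectors with unit coefficients indeed produce matching $X$/$Y$ inequality pairs whose combination yields equalities (rather than merely inequalities), and that the pinning valuations of the $\X_i^\ell$ (namely $\val(r_\ell)/2$) are exactly compensated by the valuation $-\val(r_\ell)$ of the $Y$-coefficient $r_\ell^{-1}$ in the opposing root vector. Once these numerical coincidences are verified case by case, the dimension count is straightforward.
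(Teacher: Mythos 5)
Your proposal is correct and takes essentially the same route as the paper: express $X$ and $Y$ as sums of root vectors, observe that $H$ is harmless, note that the matching $X$/$Y$ valuations force equalities rather than inequalities, decouple the system according to the blocks $B_\gamma$, and count one free parameter for each $\gamma\in\Gamma_\labeleven\cup\Gamma_\labelhyp$ and zero for all others, with the very-even case handled separately via the modified chain $\X_{i-1,-i}$. The only nit is notational — since $Y_\alpha$ lies in $\LieG_{-\alpha}$, the second condition should read $\val(Y_\alpha)-\alpha(x)\geq 0$ — but your subsequent derivations use the correct inequality, so the argument goes through.
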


\begin{proof}
Suppose that $\LieTrip_\Gamma = \{Y,H,X\}$ is a Lie triple produced in Section~\ref{S:representatives} from a choice of partition $\Gamma$ of $\Ip$.  Then $H \in \spn_{\R}\{\HH_1, \ldots, \HH_n\}$, so it lies in $\LieG_{x,0}$ for all $x \in \apart$.  Let $\Phi_X$ be the set of roots such that for some $\gamma \in \Gamma$, $X|_{V_\gamma}$ has a nonzero projection onto the root space $\LieG_{\alpha}$.  Then we have determined an expression of the form $X = \sum_{\alpha \in \Phi_X} X_{\alpha}$ with each $X_\alpha$ denoting an element of $\LieG_{\alpha}$.  

Reviewing the construction reveals that for our choice of $X$ and $Y$, we have $\Phi_X = -\Phi_Y$, that is, $Y=\sum_{\alpha\in \Phi_X}Y_{\alpha}$ for some nonzero $Y_\alpha \in \LieG_{-\alpha}$.  
We now list, in Table~\ref{Table:rootvectors}, all the pairs $(X_\alpha, Y_\alpha)$, up to multiplication by scalars in $\R^\times$, which appear in the expressions for $X$ and $Y$ in \eqref{E:hypX}, \eqref{E:pairsX}, \eqref{E:quadX}, \eqref{E:tripX}, \eqref{E:signX}, \eqref{E:aniX} and \eqref{E:veryevenX}.  In doing so, we make use of the fact that the coefficients $\{c,s,2,-1\}$ lie in $\R^\times$ but that the coefficients $r_{i,j}$ and $r_\ell$ (often abbreviated as $r$) variously take values in $\{\R^\times, \varpi \R^\times\}$.   Given that $\val(\Ximj)=\val(\Xipj)=0$ and $\val(\Xil)=\frac12\val(r_\ell)$, we compute the valuations of $X_\alpha$ and $Y_\alpha$ in the last two columns of Table~\ref{Table:rootvectors}.  

\begin{table}[ht]
\begin{tabular}{ccccc}
$\alpha \in \Phi^+$ & $X_\alpha$ & $Y_{\alpha}$ & $\val(X_\alpha)$ & $\val(Y_\alpha)$ \\
\hline
$\ep_i-\ep_{i+1}$ & $\X_{i,i+1}$ & $\X_{i+1,i}$ & $0$ & $0$ \\
$\ep_i + \ep_j$ & $\X_{i,-j}$ & $\X_{-i,j}$ & $0$&$0$\\
 $\ep_i + \ep_j$ & $\varpi \X_{i,-j}$ & $\varpi^{-1}\X_{-i,j}$ & $1$ & $-1$\\
 $\ep_i$ & $\Xil$ & $r_\ell^{-1}\X_{-i}^\ell$ & $\frac12\val(r_\ell)$ & $-\frac12\val(r_\ell)$
 \end{tabular}
\caption{Nonzero pairs $(X_\alpha,Y_\alpha)$, with $\alpha \in \Phi_X$, such that the projections of $X$ and $Y$ onto the root spaces $\LieG_{\alpha}$ and $\LieG_{-\alpha}$ lie in $\R^\times X_\alpha$ and $\R^\times Y_\alpha$, respectively.  The final columns record the valuations of $X_\alpha$ and $Y_\alpha$, respectively.} \label{Table:rootvectors}
\end{table}

Note that $X,Y\in \LieG_{x,0}$ if and  only if $\val(X_\alpha)\geq -\alpha(x)$ and $\val(Y_\alpha)\geq \alpha(x)$ for all $\alpha$.  We observe from Table~\ref{Table:rootvectors} that our representatives satisfy $\val(Y_{\alpha})=-\val(X_\alpha)$ for all $\alpha$.  Hence these were optimally chosen: $\LieTrip_\Gamma\in \LieG_{x,0}$ if and only if 
\begin{equation}\label{E:system}
\alpha(x)=-\val(X_\alpha), \quad \forall \alpha \in \Phi_X.
\end{equation}
We claim this system of equations is consistent, whereby the solution is an affine subspace $\apart_\Gamma$ which is a union of facets of $\apart$.  In this case we'll let $\facet$ be any maximal facet of $\apart_\Gamma$; note that $\dim(\facet)=\dim(\apart_\Gamma)$.
 


To solve \eqref{E:system} explicitly, let $\tilde{\Gamma}$ be the partition of the set $\{1, 2, \cdots, m\}$ induced by the partition $\Gamma$; that is, for each $\gamma \in \Gamma$ the element $\tilde{\gamma}\in \tilde{\Gamma}$ is the set of indices of the Witt basis $B_\gamma$ attached to $V_\gamma$.  For each $\gamma \in \Gamma$, the roots $\alpha \in \Phi_{X|_{V_\gamma}}$ are linear combinations of $\{\ep_i \mid i \in \tilde{\gamma}\}$.  The linear system \eqref{E:system} can thus be decoupled into $\vert \Gamma \vert$ distinct linear systems, whence any solution may be written $x = \sum_{\tilde{\gamma}\in \tilde{\Gamma}}x_{\tilde{\gamma}}$ as each $x_{\tilde{\gamma}}$ runs over an affine set $\apart_{\tilde{\gamma}}$.  Let $e_{\tilde{\gamma}}$ be the vector such that $\ep_i(e_{\tilde{\gamma}})=1$ for all $i\in \tilde{\gamma}$ and $0$ otherwise.

It is then a straightforward exercise to see that solving these uncoupled systems (with notation of the corresponding paragraphs in Section~\ref{S:representatives}) yields
$$
\apart_{\tilde{\gamma}} = \begin{cases}
\mathbb{R}e_{\tilde{\gamma}} & \text{if $\gamma \in \Gamma_\labeleven \cup \Gamma_\labelhyp$},\\
\{-\frac12\val(r)e_{\tilde{\gamma}}\} & \text{if $\gamma \in \Gamma_\labelpairs\cup \Gamma_\labelfours \cup\Gamma_\labeltriple$},\\
 \{-\frac12\val(r_\ell)e_{\tilde{\gamma}}\} & \text{if $\gamma \in \Gamma_\labelsign\cup \Gamma_\labelani$.}
 \end{cases}
$$
We conclude that $\dim(\apart_\Gamma)=\dim(\facet) = \vert \Gamma_\labeleven \vert + \vert \Gamma_\labelhyp \vert$, proving the final statement of the proposition.

Now suppose that $\partt$ is very even, so that there are two $\SO(q)$-orbits.  The first is constructed as in Section~\ref{S:hyp}.  Since $\Gamma = \Gamma_\labeleven$ consists entirely of pairs, it has size $\dim(\facet)=\frac12 \vert \Ip\vert = \frac12 \vert \partt \vert$, which gives the stated dimension for the first $\SO(q)$-orbit.  The second $\SO(q)$-orbit is obtained by modifying the choice of $X|_{V_\gamma}$ on one $\gamma \in \Gamma_\labeleven$ with an expression of the form \eqref{E:veryevenX}.  The corresponding linear system 
consistent, with solution space $\mathbb{R}\tilde{e}$ where $\ep_j(\tilde{e})=0$ if $j \notin \tilde{\gamma}$, $\ep_j(\tilde{e})=1$ if $1\leq j<i$, and $\ep_{i}(\tilde{e})=-1$.  As the solution to this subsystem is again one-dimensional, the dimension of the facet is the same for both orbits attached to $\partt$.
\end{proof}

Recall that by definition, a pair $\gamma = \{(j_t,m), (j_t,m')\}$ in $\Gamma_\labelhyp$  corresponds to a hyperbolic plane in $q_{j_t}$.  Therefore to maximize $\vert \Gamma_\labeleven\vert + \vert \Gamma_\labelhyp \vert$ over all partitions $\Gamma$ of $\Ip$ is to choose a partition which contain all such hyperbolic planes, and all the even parts, from $\Ip$.  Then $\Ip \setminus (\Gamma_\labeleven \cup \Gamma_\labelhyp)$ has precisely $\sum_{i=1}^s \da{q_{j_i}}$ elements, corresponding to the anisotropic kernels of all of the quadratic forms in $\qtup$.  Complete this to a partition $\Gammamax$ of $\Ip$ satisfying Proposition~\ref{P:partition}.

\begin{theorem}\label{T:DeBackercorrespondence}
Let $(\partt,\qtup) \in \Nilp(\cls{q},n)$ or $\Nilps(n)$, and $\Gammamax$ a partition of $\Ip$ as above.  Then the corresponding pair $(\facet_\Gammamax, X_\Gammamax)$ is distinguished. Its associativity class is the unique one attached to the rational nilpotent orbit $G\cdot X_\Gammamax$ by the DeBacker correspondence, for $G = \Orth(q)$ or $G=\SO(q)$.
\end{theorem}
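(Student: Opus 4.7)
The claim has two parts: (a) $(\facet_{\Gammamax}, X_{\Gammamax})$ is distinguished, and (b) its $0$-associativity class corresponds under DeBacker's bijection to the rational orbit $G\cdot X_{\Gammamax}$. Part (b) is essentially automatic once (a) is established. Indeed, DeBacker's main result gives a bijection between rational nilpotent orbits of $G$ and $0$-associativity classes of distinguished pairs; since Theorem~\ref{T:mainthm} shows that the parameters $(\partt,\qtup)$ exhaust the rational nilpotent orbits, and our distinguished pair satisfies $X_{\Gammamax}\in G\cdot X_{\Gammamax}$ tautologically, it must lie in the associativity class attached to that orbit.

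So the task reduces to (a), which I would establish by a dimension comparison. Proposition~\ref{P:dimension} already gives $\dim\facet_{\Gammamax} = |\Gammamax_\labeleven| + |\Gammamax_\labelhyp|$ as a maximal facet inside $\buil(\LieTrip_{\Gammamax})\cap\apart$. To upgrade this to maximality in all of $\buil(\LieTrip_{\Gammamax})$, the plan is to use the chain of identifications $\buil(\LieTrip_{\Gammamax}) = \buil(G)^{\phiR} = \buil(G^\phi)$ provided by \cite[Corollary~4.5.5]{DeBacker2002} and Theorem~\ref{T:building}. Since maximal facets of the building of a reductive group are chambers, they have common dimension equal to the split rank of $G^\phi$, so $\facet_{\Gammamax}$ is distinguished if and only if $|\Gammamax_\labeleven| + |\Gammamax_\labelhyp|$ equals this split rank.

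The key identity is therefore the split-rank calculation. Using the product formula \eqref{E:cent} together with the facts that $\Orth(q_{\wtf^i})$ has split rank $(m_i - \da{q_{\wtf^i}})/2$ and $\Sp(m_i)$ has split rank $m_i/2$, the split rank of $G^\phi$ is
\[
  \sum_{\substack{i\in\partt \\ i\text{ odd}}} \frac{m_i - \da{q_{\wtf^i}}}{2} + \sum_{\substack{i\in\partt \\ i\text{ even}}} \frac{m_i}{2}.
\]
By construction, $\Gammamax$ collects all even parts into pairs of type \labeleven\ (contributing the second sum) and exhausts the hyperbolic planes of each $q_{\wtf^i}$ into pairs of type \labelhyp\ (contributing the first sum), leaving precisely the anisotropic kernels to be absorbed by the remaining classes; so the two quantities agree. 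For very even $\partt$, the argument applies separately to each $\SO(q)$-orbit: here $G^\phi$ is a product of symplectic groups of total split rank $|\partt|/2$, matching the common value $|\partt|/2$ recorded for both orbits in the last paragraph of Proposition~\ref{P:dimension}, so both associated pairs are distinguished. The main obstacle is simply the careful bookkeeping in this split-rank identity; once it is in place, (a) follows, and (b) drops out via the bijection from \cite{DeBacker2002} combined with Theorem~\ref{T:mainthm}.
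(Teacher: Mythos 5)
Your proposal is correct and follows the same strategy as the paper's proof: apply Proposition~\ref{P:dimension} to get $\dim\facet_{\Gammamax}$ in the apartment, invoke Theorem~\ref{T:building} (via the identification $\buil(\LieTrip)=\buil(G)^{\phiR}=\buil(G^\phi)$) to equate the maximal dimension in $\buil(\LieTrip)$ with the split rank of $G^\phi$, and match the two using the product decomposition \eqref{E:cent}. Your version spells out the bookkeeping (the split-rank formula, the very even case, and the passage from "distinguished" to "unique associativity class" via DeBacker's bijection) a bit more explicitly than the paper, but the argument is identical in substance.
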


\begin{proof}
Since for each $\Gamma$, each pair in $\Gamma_\labeleven\cup\Gamma_\labelhyp$ contributes one linear degree of freedom to the solution space $\apart_{\Gamma}$, we conclude that for our choice of $\Gamma =\Gammamax$ that
$$
\dim(\facet_\Gammamax) =\frac12\left(\vert \partt \vert-\sum_{i=1}^s \da{q_{j_i}} \right).
$$
On the other hand, by Theorem~\ref{T:building} we have $\dim(\buil(\LieTrip))=\dim(\buil(G^\phi))$, which is equal to the split rank of $G^\phi$.  The split rank of $\rU(\mults^i,\wtf^i)$ is exactly the maximal number of orthogonal hyperbolic planes in $\wtf^i$; for $i$ odd this is therefore $\frac12(\deg(\wtf^i)-\da{\wtf^i})$. We deduce from \eqref{E:cent} that
$$
\dim(\facet_\Gammamax) = \dim (\buil(\LieTrip)),
$$
so the facet is indeed distinguished, as required.
\end{proof}

This theorem establishes a constructive map from the classical partition-based parametrization of nilpotent orbits of orthogonal and special orthogonal groups to the building-based parametrization proposed by DeBacker.
 
\bibliographystyle{amsplain}
\providecommand{\bysame}{\leavevmode\hbox to3em{\hrulefill}\thinspace}
\providecommand{\MR}{\relax\ifhmode\unskip\space\fi MR }
\providecommand{\MRhref}[2]{%
  \href{http://www.ams.org/mathscinet-getitem?mr=#1}{#2}
}
\providecommand{\href}[2]{#2}

\end{document}